\documentclass[12pt,english]{amsart}
\usepackage[english]{babel}
\usepackage{amsmath,amsthm,amssymb}
\usepackage{xcolor}
\usepackage[colorlinks]{hyperref}
\usepackage{comment}

\newtheorem{theorem}{Theorem}[section]
\newtheorem{lemma}[theorem]{Lemma}
\newtheorem{corollary}[theorem]{Corollary}
\newtheorem{remark}[theorem]{Remark}

\newtheorem{proposition}[theorem]{Proposition}
\theoremstyle{definition}

\numberwithin{equation}{section}

\begin{document}

\title[]
{Reverse Urysohn inequality for functions and functional Shephard problem}

\author{V. Gorev}
\thanks{This research was supported by the Russian Science Foundation Grant 25-11-00007 at Lomonosov Moscow
State University.}

\maketitle

\begin{abstract}
The results obtained in this paper provide a generalization of some geometric inequalities to the class of log-concave functions. They describe the relationships between the Quermassintegrals of functions and bodies, giving estimates for constants involved in functional Shephard-type problems.
\end{abstract}

\noindent
Keywords: Log-concave function; Shephard problem; Functional inequalities; Urysohn inequality

\section{Introduction}
One geometrical consequence of Fubini's theorem is the following fact:

{\it
Let $K$ and $V$ be two bodies in $\mathbb{R}^n$ with $n \ge 2$, $H_0$ be a fixed $(n-1)$-dimensional hyperplane. Assume that, for any $(n-1)$-dimensional hyperplane $H$ parallel to $H_0$, the inequality
$$
\lambda_{n-1}(S_H(K)) \le \lambda_{n-1}(S_H(V))
$$
holds, where $S_H(M)$ is the intersection of the set $M$ with the hyperplane $H$. Then we have
$$
\lambda_n(K) \le \lambda_n(V).
$$}

Thus, inequalities for the volumes of $(n-1)$-dimensional sections of bodies imply inequalities for the volumes of the bodies themselves. There is a natural question are there any other configurations of sections with the same property.
Naturally, the question arises: are there any other configurations of sections with the same property?

%Отметим, что из-за необходимости понижения размерности все приведенные задачи рассматриваются только в пространствах $\mathbb{R}^n$ для $n \ge 2$.

Instead of comparing volumes of sections in parallel planes, one can compare volumes in planes passing through a fixed point (for example, $0$). The corresponding problem is called the Busemann-Petty problem.

{\bf The classical Busemann-Petty problem.}
Let $K$ and $V$ be two centrally symmetric convex bodies in $\mathbb{R}^n$, and let the inequality
$$
\lambda_{n-1}(S_H(K)) \le \lambda_{n-1}(S_H(V))
$$
holds for every $(n-1)$-dimensional hyperplane $H$ passing through the point $0$. Is it true that
$$
\lambda_n(K) \le \lambda_n(V)?
$$
Here $S_H(M)$ denotes the intersection of the set $M$ with the hyperplane $H$.

It turns out that the answer to this question is positive for the dimensions $n \le 4$ and negative for $n \ge 5$ (see \cite[Theorem 5.4]{Kold}).

In a certain sense, the dual concept to a section of a body is its projection.
Thus, we arrive at the the following problem.

{\bf The classical Shephard problem.}
Let $K$ and $V$ be two centrally symmetric convex bodies in $\mathbb{R}^n$, and let the inequality
$$
\lambda_{n-1}(P_H(K)) \le \lambda_{n-1}(P_H(V))
$$
holds for any $(n-1)$-dimensional hyperplane $H$ passing through the point $0$. Is it true that
$$
\lambda_n(K) \le \lambda_n(V)?
$$
Here $P_H(M)$ denotes the orthogonal projection of the set $M$ onto the hyperplane $H$.

It turns out that the answer to this question is generally negative. Namely, it is positive only for the dimension $n = 2$ (see \cite[Theorem 8.12]{Kold}). 

Thus, in higher dimensions, the corresponding inequalities are not valid. In this regard, the problem of finding the asymptotics (or at least the most accurate upper estimate) arises for the following two sequences:
$$
S^S_{n,k} = \inf \left\{ \frac{\lambda_n(V)}{\lambda_n(K)} \colon \text{$K$, $V$ are convex bodies such that} \forall H \in Gr_{n,k} \colon \lambda_{k}(P_H(K)) \le \lambda_{k}(P_H(V)) \right\}
$$
and
$$
S^{BP}_{n,k} = \inf \left\{ \frac{\lambda_n(V)}{\lambda_n(K)} \colon \text{$K$, $V$ are convex bodies such that} \forall H \in Gr_{n,k} \colon \lambda_{k}(S_H(K)) \le \lambda_{k}(S_H(V)) \right\},
$$
where $Gr_{n,k}$ is the set of all $k$-dimensional hyperplanes in $\mathbb{R}^n$ passing through the point $0$.

Many proofs of geometric inequalities involve various integral transformations applied to the indicator functions of corresponding bodies. Therefore, a natural step is to study possible functional generalizations of such geometric problems (see, for example,\cite{A-GMJR}, \cite{A-G}, \cite{A-AKM}, \cite{BCF}, \cite{KM}).
This work continues the research initiated in \cite{GoK}, where functional Busemann--Petty and Shephard problems were considered for the class of logarithmically concave functions. In particular, this work provides a generalization of the following result from \cite{GK} (see Theorem 4.8 there).

\begin{theorem}
\label{th4}
Let $K$, $V$ be convex bodies in $\mathbb{R}^n$, and let the Shephard condition
$$
|P_H(K)| \le |P_H(V)| \quad \forall H \in Gr_{n,k}
$$
hold. Then
$$
|K|^{\frac{1}{n}} \le c \ln(n) |V|^{\frac{1}{n}},
$$
where $c>0$ is an absolute constant.
\end{theorem}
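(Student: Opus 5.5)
The approach is to convert the Shephard condition, which compares only $k$-dimensional projections, into an inequality between quermassintegrals. On the side of $K$ we then use the Aleksandrov--Fenchel (Urysohn-type) inequality. On the side of $V$ we use a reverse Urysohn inequality, which holds only after putting $V$ in a suitable position. This forces the first step: checking that the Shephard condition is invariant under linear maps.

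\textbf{Step 1 (linear invariance).} Let $T\in GL_n$. For $H\in Gr_{n,k}$, up to a constant depending only on $n,k$,
$$|P_H(M)| = c\, V(M[k],B_{H^\perp}[n-k]).$$
Multilinearity of mixed volumes gives
$$V(TM[k],B_{H^\perp}[n-k])=|\det T|\,V(M[k],T^{-1}B_{H^\perp}[n-k]).$$
Here $T^{-1}B_{H^\perp}$ is an $(n-k)$-dimensional ellipsoid in some subspace $E$. The mixed volume with it equals (a factor depending only on $T,H$) times $|P_{E^\perp}(M)|$. Hence the hypothesis for $K,V$ implies the same hypothesis for $TK,TV$. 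The quantity $|K|^{1/n}/|V|^{1/n}$ is also invariant under $T$. So we may put $V$ in any convenient position; we choose the minimal mean width position, after a translation (projections are translation invariant).

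\textbf{Step 2 (Kubota).} By Kubota's formula,
$$W_{n-k}(M)=c_{n,k}\int_{Gr_{n,k}}|P_H(M)|\,d\nu(H).$$
Averaging the hypothesis over $H$ therefore gives $W_{n-k}(K)\le W_{n-k}(V)$.

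\textbf{Step 3 (the two comparisons).} The Aleksandrov--Fenchel inequalities give monotonicity of the normalized quermassintegrals:
$$\Big(\tfrac{|M|}{|B_2^n|}\Big)^{1/n}\le \Big(\tfrac{W_{n-k}(M)}{|B_2^n|}\Big)^{1/k}\le \tfrac{W_{n-1}(M)}{|B_2^n|}=\tfrac{w(M)}{2}.$$
For $K$, the left inequality yields
$$|K|^{1/n}\le |B_2^n|^{1/n-1/k}W_{n-k}(K)^{1/k}.$$
For $V$, the right inequality bounds $W_{n-k}(V)^{1/k}$ by a multiple of the mean width $w(V)$.

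\textbf{Step 4 (reverse Urysohn).} In the minimal mean width position we need
$$w(V)\le c\ln n\,\Big(\tfrac{|V|}{|B_2^n|}\Big)^{1/n}.$$
For symmetric $V$ this follows from the Figiel--Tomczak-Jaegermann/Lewis $\ell$-position estimate combined with Pisier's bound $O(\log n)$ on the Rademacher projection. For general convex bodies we would invoke the non-symmetric version (Banaszczyk--Litvak--Pajor--Szarek), after choosing a suitable translate.

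Chaining Steps 2--4 gives $|K|^{1/n}\le c\ln n\,|V|^{1/n}$. The normalizing powers of $|B_2^n|$ cancel exactly when the three inequalities are combined.

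The main obstacle is Step 4, the reverse Urysohn inequality with only a logarithmic loss. It is the deep input, and in the non-symmetric case it requires care with the choice of center. Step 1 is routine but essential, since without it we could not move $V$ to the good position.
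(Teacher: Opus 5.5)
Your argument is correct, and it takes a different route from the paper. The paper only cites this theorem from \cite{GK}. Its proof of Theorem~\ref{th0}, specialized to $f_1=I_K$, $f_2=I_V$, runs as follows:
\begin{itemize}
\item it never moves $K$, and instead averages the Shephard condition against the $-n$-th moment;
\item it bounds $\bigl(\int_{Gr_{n,k}}|P_HK|^{-n}\,\nu_{n,k}(dH)\bigr)^{-1/n}$ from below by $\frac{\omega_k}{\omega_n^{k/n}}|K|^{k/n}$ using the Milman--Yehudayoff inequality (Theorem~\ref{th2});
\item it uses Grinberg's $SL(n)$-invariance of this $-n$-th moment (Lemma~\ref{lemma4}) to replace $V$ by $TV$;
\item it finishes with Jensen, Kubota, Alexandrov and the reverse Urysohn inequality for $TV$.
\end{itemize}

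You instead show that the hypothesis itself is $GL(n)$-invariant. Then the first-moment average (Kubota) and the classical Alexandrov--Fenchel inequality suffice on the side of $K$. Your Step~1 is right, and it can be seen even more directly. Since $\ker(P_HT)=T^{-1}H^\perp=:E$, we have $P_HT=S\circ P_{E^\perp}$ for a linear isomorphism $S\colon E^\perp\to H$. Hence $|P_H(TM)|=|\det S|\,|P_{E^\perp}M|$, with a factor independent of $M$.

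What your route buys is elementarity. It avoids the sharp affine isoperimetric inequality of Milman--Yehudayoff and Grinberg's theorem. The same trick also works levelwise for log-concave functions: $A_t(P_Hf)=P_HA_t(f)$, and the factor $|\det S|$ does not depend on $t$. Combined with $W_{n-k}(f)\ge\omega_n^{1-k/n}\int_0^\infty\lambda_n(A_t(f))^{k/n}\,dt\ge\omega_n^{1-k/n}\|f\|_{n/k}$ (Lemma~\ref{lemma6}), this gives Theorem~\ref{th0} as well.

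What the paper's route buys is a stronger statement. It uses the hypothesis only through the single affine-invariant inequality $\int|P_HK|^{-n}\,d\nu_{n,k}\ge\int|P_HV|^{-n}\,d\nu_{n,k}$, so the conclusion holds under that weaker averaged assumption. Your argument needs the pointwise condition in order to transport it to the new position.

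A minor point on Step~4: you need neither Banaszczyk--Litvak--Pajor--Szarek nor a choice of center. Mean width and volume are translation invariant. The non-symmetric case follows from the symmetric one applied to $V-V$, since $w(V-V)=2w(V)$ and $|V-V|\le 4^n|V|$ by Rogers--Shephard.
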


One of the important steps in proving this theorem is the following statement (see \cite[Theorem 6.5.4]{A-AGM} and \cite[Theorem 1.11.5]{BGVV}).

\begin{theorem}[Reverse Urysohn inequality]
\label{prop1}
Let $A$ be a convex body. Then
$$\min\{W_{n-1}(TA) \colon T \in GL(n),\, \lambda_n(TA) = 1\}\le C \omega_n \sqrt{n} \ln(n),$$
where $W_{n-1}(A)$ is the $(n-1)$-th Quermassintegral of the body $A$, $\omega_n$ is the volume of the Euclidean unit ball of radius 1, and $C>0$ is an absolute constant.
In other words,
$$
\max \Big\{\min_{T \in GL(n),\, \lambda_n(TA) = 1} W_{n-1}(TA) \colon A\text{ is a convex body} \Big\} \le C \omega_n \sqrt{n} \ln(n).
$$
\end{theorem}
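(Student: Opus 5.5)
The plan is the classical route through the $\ell$-position and the $MM^*$-estimate. First we rewrite the quantity in terms of mean width. By Kubota's formula, $W_{n-1}(A)=\omega_n w(A)/2$, where $w(A)=2\int_{S^{n-1}} h_A\,d\sigma$ is the mean width. Mean width is invariant under translations, so we may translate $A$ and assume its barycenter is at $0$. It then suffices to find $T\in GL(n)$ with $\lambda_n(TA)=1$ and $w(TA)\le C\sqrt n\ln n$.

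Next we reduce to the centrally symmetric case. Put $D=A-A$. By the Rogers--Shephard inequality, $\lambda_n(D)\le\binom{2n}{n}\lambda_n(A)\le 4^n\lambda_n(A)$. Also $w(TD)=2w(TA)$ for any linear $T$. If $T$ normalizes $D$ to volume one, then
$$
\lambda_n(TA)\ge 4^{-n}, \qquad \text{so} \qquad \lambda_n(TA)^{1/n}\ge 1/4 .
$$
Rescaling $TA$ to volume $1$ multiplies its mean width by at most $4$. Hence it is enough to prove the estimate for symmetric convex bodies $K$, i.e. $\lambda_n(TK)=1$ and $w(TK)\le C\sqrt n\ln n$.

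For symmetric $K$, let $\|\cdot\|$ be the norm with unit ball $K^\circ$, so that $h_K=\|\cdot\|$. We write $M^*(K)=\int_{S^{n-1}}h_K\,d\sigma$ and $M(K)=\int_{S^{n-1}}\|x\|_K\,d\sigma$. We use the $\ell$-position: by the Figiel--Tomczak-Jaegermann / Lewis theorem, there is $T$ with
$$
\ell(TK)\,\ell\big((TK)^\circ\big)\le n\, \|R\|,
$$
where $\ell$ is the Gaussian $\ell$-norm and $R$ is the Rademacher projection. Pisier's estimate gives $\|R\|\le C\log d(K,B_2^n)\le C\ln n$, using John's theorem $d\le\sqrt n$. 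Since $\ell\approx\sqrt n\,M$, this yields
$$
M(TK)\,M^*(TK)\le C\ln n .
$$

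Finally we compare with volume. By Jensen's inequality in polar coordinates,
$$
\Big(\frac{\lambda_n(TK)}{\omega_n}\Big)^{1/n}=\Big(\int_{S^{n-1}}\|\theta\|_{TK}^{-n}d\sigma\Big)^{1/n}\ge \frac{1}{M(TK)} .
$$
Normalizing $\lambda_n(TK)=1$ gives $M(TK)\ge \omega_n^{1/n}\asymp 1/\sqrt n$. Combining with the $MM^*$ bound,
$$
M^*(TK)\le \frac{C\ln n}{M(TK)}\le C'\sqrt n\ln n .
$$
This gives $w(TK)\le C\sqrt n\ln n$, and after undoing the reductions, $W_{n-1}(TA)\le C\omega_n\sqrt n\ln n$.

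The main obstacle is the $MM^*$-estimate, specifically Pisier's bound $\|R\|\lesssim\log d(X,\ell_2^n)$ for the $K$-convexity constant. Everything else consists of standard volume and Jensen manipulations. Since this is a classical theorem cited from \cite{A-AGM} and \cite{BGVV}, in the paper I would invoke Pisier's theorem and the $\ell$-position rather than reprove them.
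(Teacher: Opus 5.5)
Your argument is correct, and it is the standard proof behind the references the paper cites for this statement. The paper gives no proof of its own and only points to \cite[Theorem 6.5.4]{A-AGM} and \cite[Theorem 1.11.5]{BGVV}; those proofs follow your route: $W_{n-1}=\omega_n M^*$, reduction to $A-A$ via Rogers--Shephard, the $\ell$-position plus Pisier's bound to get $M(TK)M^*(TK)\le C\ln n$, and $M(TK)\ge\omega_n^{1/n}\asymp n^{-1/2}$ at volume one. One small correction: Pisier's estimate is $\|R\|\le C\log\bigl(1+d(K,B_2^n)\bigr)$, not $C\log d(K,B_2^n)$, since $\|R\|\ge 1$ and the latter fails for ellipsoids. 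With $d\le\sqrt n$ this still gives $\|R\|\le C\ln n$ for $n\ge 2$, which is the only range where the statement makes sense anyway.
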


In this work, Theorems \ref{th4} and \ref{prop1} are generalized to the class of logarithmically concave functions. Specifically, the following results are obtained (detailed notation is given in the next section).

\begin{theorem}
\label{th0}
Let $f_1, f_2 \in Q^n$ and satisfy the Shephard condition:
$$
\|P_H f_1\|_1 \le \|P_H f_2\|_1 \quad \forall H \in Gr_{n,k}.
$$
Then the inequality
$$
\|f_1\|_{\frac{n}{k}} \le (C \ln(n))^k \|f_2\|_{1}^{\frac{k}{n}} \|f_2\|_{\infty}^{\frac{n-k}{n}},
$$
holds. Equivalently,
$$
\left(\int_{\mathbb{R}^n} f_1^{\frac{n}{k}} \, dx\right)^{\frac{1}{n}} \le C \ln(n) \|f_2\|_{\infty}^{\frac{n-k}{kn}}
\left(\int_{\mathbb{R}^n} f_2 \, dx\right)^{\frac{1}{n}},
$$
where $C > 0$ is an absolute constant.
\end{theorem}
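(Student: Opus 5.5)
The idea is to reduce everything to superlevel sets, average the Shephard condition over the Grassmannian to compare functional quermassintegrals, and then follow the geometric proof of Theorem \ref{th4}. Alexandrov's inequalities give the lower bound for $f_1$. Theorem \ref{prop1}, applied to a single well-chosen level set of $f_2$, gives the upper bound. Throughout, $K_t(f)=\{f\ge t\}$. These sets are convex since $f\in Q^n$ is log-concave, and $\{P_Hf\ge t\}=P_H(K_t(f))$. Hence
$$\|P_Hf\|_1=\int_0^{\infty}\lambda_k(P_HK_t(f))\,dt .$$
Define $W_i(f)=\int_0^\infty W_i(K_t(f))\,dt$. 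By Kubota's formula $W_{n-k}(A)=c_{n,k}\int_{Gr_{n,k}}\lambda_k(P_HA)\,dH$, integrating the Shephard condition over $Gr_{n,k}$ gives
$$W_{n-k}(f_1)\le W_{n-k}(f_2).$$

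\emph{Lower bound for $f_1$.} By the isoperimetric-type inequality $W_{n-k}(A)\ge \omega_n^{1-k/n}\lambda_n(A)^{k/n}$ we get $W_{n-k}(f_1)\ge \omega_n^{1-k/n}\int_0^\infty|K_t(f_1)|^{k/n}dt$. Write $f_1=\int_0^\infty 1_{K_t}\,dt$ and apply Minkowski's integral inequality in $L_{n/k}$. This gives $\|f_1\|_{n/k}\le\int_0^\infty |K_t(f_1)|^{k/n}dt$, so
$$W_{n-k}(f_1)\ge \omega_n^{1-k/n}\|f_1\|_{n/k}.$$

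\emph{Choice of position.} For $T\in SL(n)$ we have $\|P_H(f\circ T)\|_1=\alpha(T,H)\|P_{H'}f\|_1$, where $H'$ and the factor $\alpha(T,H)$ depend only on $T$ and $H$. So the Shephard condition is preserved when $T$ is applied to both functions. The quantities $\|f_1\|_{n/k}$, $\|f_2\|_1$ and $\|f_2\|_\infty$ are also invariant under $T$ and under translations. Let $M=\|f_2\|_\infty$ and $K=K_{Me^{-n}}(f_2)$. Then $|K|\le e^n\|f_2\|_1/M$, since $f_2\ge Me^{-n}$ on $K$. By Theorem \ref{prop1} we may assume, after a volume-preserving linear map (applied also to $f_1$) and a translation, that $W_{n-1}(K)\le C\omega_n\sqrt n\ln n\,|K|^{1/n}$.

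\emph{Upper bound for $f_2$.} Alexandrov's inequality $(W_{n-k}/\omega_n)^{1/k}\le W_{n-1}/\omega_n$ gives
$$W_{n-k}(f_2)\le \omega_n^{1-k}\int_0^M W_{n-1}(K_t)^k\,dt.$$
For $t\ge Me^{-n}$ we have $K_t\subset K$. For $t<Me^{-n}$, log-concavity and a maximum point $x_0\in K$ give $K_t\subset x_0+\frac{\ln(M/t)}{n}(K-x_0)$. Since $W_{n-1}$ is monotone, translation invariant and $1$-homogeneous,
$$W_{n-1}(K_t)\le \max\!\left(1,\tfrac{\ln(M/t)}{n}\right)W_{n-1}(K).$$
Substituting $t=Ms$ leads to
$$\int_0^M W_{n-1}(K_t)^kdt\le M\,W_{n-1}(K)^k\Big(1+\int_0^1 (\ln(1/s)/n)^k ds\Big)\le M W_{n-1}(K)^k\,(1+k!/n^k)\le 2MW_{n-1}(K)^k .$$
Combining with the bounds on $W_{n-1}(K)$ and $|K|$,
$$W_{n-k}(f_2)\le 2\omega_n (Ce\sqrt n\ln n)^k M^{1-k/n}\|f_2\|_1^{k/n}.$$

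Putting the two bounds together yields
$$\|f_1\|_{n/k}\le 2\,(\omega_n^{1/n}\sqrt n)^k (Ce\ln n)^k\,\|f_2\|_1^{k/n}\|f_2\|_\infty^{(n-k)/n}.$$
Since $\omega_n^{1/n}\sqrt n$ is bounded by an absolute constant, the factor $2$ can be absorbed into $C^k$, which gives the claim.

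The main obstacle is the upper step. The optimal positions from Theorem \ref{prop1} differ from level set to level set, and the Shephard condition is only integrated over $t$, not levelwise. The plan handles this with one position, adapted to the single body $K_{Me^{-n}}$, and controls all other level sets by inclusion into dilates of it. The points to verify carefully are the dilation inclusion for $t<Me^{-n}$ and that the resulting tail integral stays $O(1)$ uniformly in $k\le n$.
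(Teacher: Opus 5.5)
Your proof is correct, and it takes a genuinely different and more elementary route than the paper.

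The paper never moves $f_1$. It bounds $\|f_1\|_{n/k}$ by the negative moment $\bigl(\int_{Gr_{n,k}}\|P_Hf_1\|_1^{-n}\,\nu_{n,k}(dH)\bigr)^{-1/n}$ via the functional Milman--Yehudayoff inequality (Theorem~\ref{th3}), and transfers the Shephard condition to this quantity. It then uses the $SL(n)$-invariance of that quantity (Lemma~\ref{lemma4}) to replace $f_2$ alone by an arbitrary $Tf_2$, and passes to $W_{n-k}(Tf_2)$ by Jensen. Finally it bounds $W_{n-k}(Tf_2)$ through Theorem~\ref{prop2}, whose upper estimate rests on the concavity of $u\mapsto W_{n-k}^{1/k}(A_{[u]}(f))$ (Lemma~\ref{lemma3}, i.e.\ Brunn--Minkowski for quermassintegrals) and on the level set $A_{[k]}(f)$.

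You instead use that the Shephard condition is preserved when one linear map is applied to both functions, because $P_H\circ T$ factors through the orthogonal projection onto another $k$-plane. So you only need the averaged inequality $W_{n-k}(f_1)\le W_{n-k}(f_2)$ in a good position. Your lower bound is then the classical, position-free isoperimetric inequality for quermassintegrals plus Minkowski's integral inequality (the paper's Lemma~\ref{lemma6}). Your upper bound replaces Theorem~\ref{prop2} by the inclusion $K_t\subset x_0+\frac{\ln(M/t)}{n}(K-x_0)$. This lets you apply Alexandrov level by level and use only monotonicity and $1$-homogeneity of $W_{n-1}$.

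The two points you flagged do check out. For $\lambda=n/\ln(M/t)\in(0,1)$, log-concavity gives $f_2(x_0+\lambda(x-x_0))\ge M^{1-\lambda}t^{\lambda}=Me^{-n}$. And $\int_0^1\ln(1/s)^k\,ds=k!\le n^k$, so the tail integral is at most $2$.

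What each route buys:
\begin{itemize}
\item Yours avoids the Milman--Yehudayoff theorem and the affine-invariance lemma altogether. (That lemma is stated in the paper for the $+n$-th moment but applied to the $-n$-th.) It is essentially the classical proof of Theorem~\ref{th4} lifted to functions levelwise.
\item The paper's route uses the Shephard condition only through the affine-invariant negative moment, so it proves the inequality under that weaker hypothesis. It also produces the two-sided comparison of Theorem~\ref{prop2}, which is of independent interest.
\end{itemize}

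Two harmless details. Working at level $u=n$ rather than $u=k$ costs $e^k$ instead of $e^{k^2/n}$, which is absorbed into $C^k$. The nonempty interior of $K$, required by Theorem~\ref{prop1}, follows from $\|f_2\|_1>0$ by the same dilation argument.
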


\begin{theorem}
\label{th6}
Let $f \in Q^n$, then
$$
\min\{W_{n-1} (T f) \colon T \in GL(n),\, \|T f\|_1 = 1 \} \le C \|f\|_{\infty}^{\frac{n-1}{n}} \omega_n \sqrt{n} \ln(n),
$$
where $C > 0$ is an absolute constant.
\end{theorem}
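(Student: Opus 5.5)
We need to guess the definitions. The class $Q^n$ is presumably log-concave functions, $Tf = f\circ T^{-1}$, and the functional quermassintegral is defined by a layer-cake formula,
$$W_{n-1}(f)=\int_0^{\infty} W_{n-1}(\{f\ge t\})\,dt,$$
with $W_{n-1}(K)=\omega_n w(K)/2$ (mean width). I will assume this definition.

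The plan is to reduce Theorem \ref{th6} to the body case, Theorem \ref{prop1}, applied to one well-chosen level set, and then to use log-concavity to control all the other level sets by that one.

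Normalize first: replacing $f$ by $f/\|f\|_\infty$ scales both $\|f\|_1$ and $W_{n-1}(f)$ linearly. The condition $\|Tf\|_1=1$ then changes the admissible $T$ by a factor affecting $|\det T|$. Tracking homogeneity gives the following: if $\|f\|_\infty=M$ and $\|Tf\|_1=1$, then $g=Tf/M$ has $\|g\|_\infty=1$ and $\|g\|_1=1/M$. Rescaling $g$ by the dilation $x\mapsto M^{1/n}x$ gives unit integral and multiplies $W_{n-1}$ by $M^{-1/n}$. So it suffices to prove the claim for $\|f\|_\infty=1$, and the factor $\|f\|_\infty^{(n-1)/n}$ comes out of this bookkeeping: one power from the height, and $-1/n$ from the linear rescaling.

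Now assume $\|f\|_\infty=f(0)=1$ after a translation, which changes nothing. Take the body $K=\{f\ge e^{-1}\}$ (or $\{f\ge 1/2\}$); this is a convex body. Apply Theorem \ref{prop1} to $A=K$ to get $T$ with $|TK|=1$ and $W_{n-1}(TK)\le C\omega_n\sqrt n\ln n$. Replace $f$ by $Tf$, so $K$ is now in this good position.

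Two estimates are needed for the level sets $K_t=\{f\ge t\}$.

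First, for $t\le e^{-1}$, log-concavity with $f(0)=1$ gives $K_t\subset \ln(1/t)\,K$. Indeed, if $f(x)\ge t$, then at $y=x/s$ with $s=\ln(1/t)\ge1$ we have $\log f(y)\ge \frac1s\log t=-1$. For $t\ge e^{-1}$ we have $K_t\subset K$ directly. Hence
$$W_{n-1}(f)\le W_{n-1}(K)\Big(e^{-1}+\int_0^{e^{-1}}\ln(1/t)\,dt\Big)\le c\,W_{n-1}(K).$$

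Second, we need a lower bound on the integral: $\|f\|_1\ge e^{-1}|K|=e^{-1}$. We also need an upper bound, $\|f\|_1\le\int_0^1|K_t|\,dt\le |K|\int \max(1,\ln(1/t))^n\,dt\approx n!\,|K|$. This upper bound is too big, so a straight rescaling to $\|f\|_1=1$ costs $(n!)^{1/n}\sim n$, which is unacceptable. Fixing this is the main obstacle.

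To fix it, I would choose the level $t_0=e^{-n}$ instead, as in the standard comparison $|\{f\ge e^{-n}\}|\approx \int f$ for log-concave $f$ with maximum 1. Precisely, $\int f\le |\{f\ge e^{-n}\}|(1+o(1))$ after also using the inclusion $K_t\subset \frac{\ln(1/t)}{n}K_{e^{-n}}$ for $t\le e^{-n}$. Conversely, $\int f\ge c^n|K_{e^{-n}}|$ via the Fradelizi-type bound $|K_{e^{-n}}|\le C^n|K_{1/e}|$, giving a comparable volume up to $C^n$, i.e.\ a constant after taking the $n$-th root.

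So set $K=K_{e^{-n}}$ and normalize $|TK|$ so that $\|Tf\|_1=1$; then $|TK|^{1/n}\simeq 1$. For the quermassintegral, the levels $t\ge e^{-n}$ contribute at most $W_{n-1}(K)$, since $K_t\subset K$ and the total $t$-length is at most 1. The levels $t<e^{-n}$ contribute at most $W_{n-1}(K)\int_0^{e^{-n}}\frac{\ln(1/t)}{n}\,dt$, which is tiny. Thus $W_{n-1}(Tf)\le 2W_{n-1}(TK)\le C'\omega_n\sqrt n\ln n$, where Theorem \ref{prop1} is applied and we use the homogeneity of degree 1 of $W_{n-1}$ under the scaling needed to move from $|TK|=1$ to $|TK|\simeq1$.

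The delicate step is the two-sided comparison of $\|f\|_1$ with $|\{f\ge e^{-n}\}|$ up to $C^n$. I would prove it from the inclusion above together with log-concavity of $t\mapsto|K_t|$ along the family $K_{e^{-s}}$, which is Brunn–Minkowski applied to $K_{e^{-(\theta s_1+(1-\theta)s_2)}}\supset\theta K_{e^{-s_1}}+(1-\theta)K_{e^{-s_2}}$.
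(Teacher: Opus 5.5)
Your first attempt, with $K=\{f\ge e^{-1}\}$, is already a complete proof, and it follows the paper's route. The paper gets Theorem~\ref{th6} from Theorem~\ref{th1} with $k=1$. That theorem combines the Chebyshev bound $\lambda_n(A_{[1]}(f))\le e\|f\|_1/\|f\|_\infty$ (Lemma~\ref{lemma1}) with $W_{n-1}(f)\le \|f\|_\infty W_{n-1}(A_{[1]}(f))$ (Proposition~\ref{cor2}, from concavity of $u\mapsto W_{n-1}(A_{[u]}(f))$ and Jensen). It then rescales and applies Theorem~\ref{prop1}. Your inclusion $K_t\subset \ln(1/t)K$ is the set-level form of that concavity and costs only the constant $1+e^{-1}$; note the levels $t\in[e^{-1},1]$ have length $1-e^{-1}$, not $e^{-1}$.

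The ``main obstacle'' you then identify is a direction error. Having chosen $T$ with $|TK|=1$, you pass to $\alpha T$ with $\alpha=\|Tf\|_1^{-1/n}$, and $W_{n-1}(\alpha Tf)=\alpha W_{n-1}(Tf)$. Only a \emph{lower} bound on $\|Tf\|_1$ enters. Markov's inequality gives $\|Tf\|_1\ge e^{-1}|TK|=e^{-1}$, hence $\alpha\le e^{1/n}$. If $\|Tf\|_1$ is as large as $n!$, then $\alpha<1$ and the rescaling only decreases $W_{n-1}$. Your upper bound on $\|f\|_1$ is never used, so there is nothing to fix.

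The detour through $K_{e^{-n}}$, as written, rests on a false claim. The ``Fradelizi-type bound'' $|K_{e^{-n}}|\le C^n|K_{1/e}|$ fails for $f(x)=e^{-|x|}$: there $K_{e^{-s}}=sB_2^n$, so the ratio is $n^n$. Likewise, the Brunn--Minkowski concavity of $s\mapsto |K_{e^{-s}}|^{1/n}$ in your last paragraph only gives $|K_{e^{-n}}|^{1/n}\le n\,|K_{1/e}|^{1/n}$, which is sharp for the same example.

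What that route actually needs is $\|f\|_1\ge e^{-n}|K_{e^{-n}}|$, which is again just Markov's inequality. With that substitution the detour also works: the rescaling factor is at most $e$, and your bound $W_{n-1}(f)\le(1+o(1))W_{n-1}(K_{e^{-n}})$ is correct. But the ``delicate step'' is not delicate, and the detour is unnecessary.
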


Note that a stronger statement, that connects Quermassintegrals of logarithmically concave functions with Quermassintegrals of their level sets (see Theorem \ref{prop2} below), is proved in this work.

Section $2$ of this work contains necessary definitions and notation. In Section $3$, we prove the inequalities that connect Quermassintegrals of functions with Quermassintegrals of bodies constructed from these functions. In Section $4$, these inequalities are applied to the proof of Theorem \ref{th0}.

\section{Definitions and notation}

Let $\langle\cdot, \cdot\rangle$ be the standard inner product on $\mathbb{R}^n$, and $|\cdot|$ be the standard Euclidean norm on $\mathbb{R}^n$, i.e., $|x|:=\sqrt{\langle x, x\rangle}$. Let $\lambda_n$ be the standard Lebesgue measure on $\mathbb{R}^n$.
The volume of the unit Euclidean ball
$B_2^n:=\{x\colon |x|\le 1\}$ in $\mathbb{R}^n$
is denoted by $\omega_n$,
i.e. $\omega_n:=\lambda_n(x\colon |x|\le 1)$.
For $p \in (-\infty, 0) \cup (0, +\infty)$, let $L^p(\mathbb{R}^n)$ denote the space of functions on $\mathbb{R}^n$ integrable to the power $p$.
On the space $L^p(\mathbb{R}^n)$, we define the functional
$$
\|f\|_{L^p(\mathbb{R}^n)}=\|f\|_p:=\Bigl(\int{_\mathbb{R}^n}|f(x)|^p dx\Bigr)^{\frac{1}{p}},
$$
which is a norm on this space for $p \ge 1$.
Furthermore, for a bounded function $f$ on $\mathbb{R}^n$, we define $\|f\|_\infty:=\sup\limits_{x\in\mathbb{R}^n}|f(x)|$.

A function $f\colon\mathbb{R}^n\to \mathbb{R}$ is said to be upper semi-continuous if the level sets 
$$
A_t(f):=\{x\in \mathbb{R}^n\colon f(x)\ge t\}
$$ 
are closed for every $t\in\mathbb{R}$.

A function $f\colon \mathbb{R}^n\to \mathbb{R}_+ := [0, +\infty)$ is called logarithmically concave (log-concave) if
$$
f(sx+(1-s)y)\ge [f(x)]^s\cdot[f(y)]^{1-s}\quad \forall x, y\in \mathbb{R}^n, \forall s\in(0, 1).
$$
In particular, the sets $A_t(f) = \{x\in \mathbb{R}^n\colon f(x)\ge t\}$ are convex for any $t$ and any logarithmically concave function $f$.

The main object studied in this paper is the class of logarithmically concave functions, namely, the class
$$
Q^n := \Big\{
f\colon \mathbb{R}^n\to \mathbb{R}_+ \colon f \hbox{ is log-concave and upper semicontinuous},
0 <\int_{\mathbb{R}^n} f \, dx < +\infty 
\Big\}.
$$

Recall that a probability Borel measure $\mu$ in $\mathbb{R}^n$ is called logarithmically concave if
$$
\mu(\lambda A + (1 - t) B) \ge \mu(A)^{t} \mu(B)^{1 - t}
$$
for every pair of Borel sets $A$, $B$ and $t \in [0, 1]$. Note that every probability density $f$ from $Q^n$ is the density of some logarithmically concave measure (see \cite[Theorem 4.3.2]{Bog}, \cite[Theorem 1.1]{Bor}).

Let $Gr_{n,k}$ be the Grassmannian manifold consisting of all possible $k$-dimensional planes in $\mathbb{R}^n$, and let $\nu_{n,k}$ be the standard Haar probability measure on it.
Let $A \subseteq \mathbb{R}^n$ be a convex body, then its $(n-k)$-th Quermassintegral is defined as
$$
W_{n-k}(A) = \frac{\omega_n}{\omega_k} \int_{Gr_{n,k}} \lambda_k(P_H(A)) \, \nu_{n,k} (dH),
$$
where $P_H(A)$ is the orthogonal projection of the body $A$ onto the plane $H$, and $\omega_n$ is the volume of the $n$-dimensional Euclidean ball of radius $1$.

Let $f \in Q^n$, then its $(n-k)$-th Quermassintegral is defined as the quantity (see \cite[Definition~3.1]{BCF})
\begin{equation}
\label{eq0}
W_{n-k} (f) := \int_0^{\|f\|_{\infty}} W_{n-k} (A_t(f)) \, dt.
\end{equation}
By Fubini's theorem, one can readily verify
$$
W_{n-k} (f) = \frac{\omega_n}{\omega_k}
\int_{Gr_{n,k}} \|P_H f\|_{L^1(H)}\, \nu_{n, k}(dH),
$$
where $P_H(f) \colon H \rightarrow \mathbb{R}_+$ is the projection of the function $f$ onto the plane $H$ defined by the formula
$$
P_H f(y) := \sup\limits_{z\in H^\bot}f(y+z),
$$

To simplify the formulas, we introduce the following notation for level sets:
$$
A_{[u]}(f) := A_{\|f\|_{\infty} e^{-u}} (f).
$$

We will need the following function $g_k(f)\colon \mathbb{R}_+ \rightarrow \mathbb{R}_+$,
\begin{equation}
\label{gk}
g_k(f)(u) := W_{n-k}^{\frac{1}{k}} (A_{[u]} (f)),
\end{equation}
which has the important property of concavity (see Lemma \ref{lemma3} below).
It is worth noting that on indicator functions of convex bodies, the function $g_k$ is constant, i.e. $g_k(I_M) = W_{n-k}^{\frac{1}{k}} (M)$.

Making the change of variables
$$
t = \|f\|_{\infty} e^{-u} \Longleftrightarrow u = -\ln\left( 
\frac{t}{\|f\|_{\infty}} \right)
$$
in the integral \eqref{eq0} we get $W_{n-k}(A_t(f)) = g_k(f)^k(u)$ and 
\begin{equation}
\label{eq1}
W_{n-k}(f) = \int_0^{\|f\|_{\infty}} g_k(f)^k\left(-\ln\left( 
\frac{t}{\|f\|_{\infty}} \right)\right)\, dt = 
\|f\|_{\infty} \int_0^{\infty} g_k(f)^k(u) e^{-u} \, du.
\end{equation}

For $T\in GL(n)$ and $f\in Q^n$, let $Tf\colon\mathbb{R}^n\rightarrow\mathbb{R}_+$ denote the function
$$
Tf(x) := f(T^{-1} x).
$$
Note that, for indicator functions of sets, $f = I_A$, the equality $T I_A = I_{T A}$ holds.

We need the following inequality, called Alexandrov's inequality (see \cite[section 1.4.4]{BGVV}) and \cite[inequality(7.67)]{Sch}): Let $K$ be a convex body in $\mathbb{R}^n$, then
\begin{equation}
\label{alex}
\left( \frac{W_i(K)}{\omega_n} \right)^{\frac{1}{n-i}} \ge
\left( \frac{W_j(K)}{\omega_n} \right)^{\frac{1}{n-j}},
\quad n > i > j \ge 0
\end{equation}

\section{Relationship between Quermassintegrals of functions and their level sets}

The main result of this section is the following theorem.

\begin{theorem}
\label{prop2}
Let $k \in \{1, \ldots, n\}$ and $f \in Q^n$, then
$$\left( \frac{k}{e \, n} \right)^k 
\le \frac{W_{n-k}(f)}{\|f\|_1^{\frac{k}{n}} \|f\|_{\infty}^{\frac{n-k}{n}} W_{n-k} \left(
\frac{A_{[k]}(f)}{\lambda_n(A_{[k]}(f))^{\frac{1}{n}}}\right)} \le e^{\frac{k^2}{n}} \le e^k.$$
\end{theorem}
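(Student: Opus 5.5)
By homogeneity of $W_{n-k}$ of degree $k$ on bodies, the denominator equals $\|f\|_1^{k/n}\|f\|_\infty^{(n-k)/n}\, g_k(f)^k(k)\,\lambda_n(A_{[k]})^{-k/n}$. After normalizing $\|f\|_\infty=1$, the statement therefore reduces to comparing
$$
W_{n-k}(f)=\int_0^\infty g_k(f)^k(u)e^{-u}\,du
$$
(formula \eqref{eq1}) with $g_k(f)(k)^k\bigl(\|f\|_1/\lambda_n(A_{[k]})\bigr)^{k/n}$. The plan is to combine three facts.

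\begin{enumerate}
\item Concavity of $g_k(f)$ in $u$ (Lemma \ref{lemma3}). This follows from the inclusion $A_{[su+(1-s)v]}\supseteq sA_{[u]}+(1-s)A_{[v]}$, monotonicity of $W_{n-k}$, and the Brunn--Minkowski inequality for quermassintegrals.
\item The same statement for volumes: $h(u)=\lambda_n(A_{[u]})^{1/n}$ is concave by Brunn--Minkowski, and $\|f\|_1=\int_0^\infty h(u)^n e^{-u}\,du$.
\item The elementary bounds for a concave nondecreasing $\varphi\ge0$ on $[0,\infty)$: $\varphi(u)\le (u/a)\varphi(a)$ for $u\ge a$, and $\varphi(u)\ge (u/a)\varphi(a)$ for $u\le a$.
\end{enumerate}

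\textbf{Upper bound.} Apply fact 3 with $a=k$ to $\varphi=g_k$. For $u\ge k$ we get $g_k(u)^k\le (u/k)^k g_k(k)^k$, and for $u\le k$ monotonicity gives $g_k(u)\le g_k(k)$. Hence
$$
W_{n-k}(f)\le g_k(k)^k\Bigl(\int_0^k e^{-u}\,du+\int_k^\infty (u/k)^k e^{-u}\,du\Bigr)\le g_k(k)^k\,\frac{\Gamma(k+1)}{k^k}\cdot(\text{small}).
$$
The weights alone do not close the argument, so we also need a lower bound on $\|f\|_1/\lambda_n(A_{[k]})$. Fact 3 applied to $h$ gives $h(u)\ge (u/k)h(k)$ for $u\le k$, so
$$
\|f\|_1\ge \lambda_n(A_{[k]})\int_0^k (u/k)^n e^{-u}\,du.
$$
The intended pairing is a single substitution: estimate the ratio $g_k(u)/h(u)$, or rather use $g_k(u)^k\le \bigl(g_k(k)/h(k)\bigr)^k h(u)^k$ for $u\ge k$. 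This is where a difficulty appears: concavity of two functions does not order their ratio.

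The cleaner route is to write $W_{n-k}(f)\le g_k(k)^k\int_0^\infty \max(1,u/k)^k e^{-u}\,du$ and to use $\|f\|_1\ge\lambda_n(A_{[k]})\cdot c$. The upper bound is then $e^{k^2/n}$ once it is shown that $\lambda_n(A_{[k]})/\|f\|_1\le$ a suitable quantity. The relevant fact is that for log-concave $f$ with $\|f\|_\infty=1$ we have $\lambda_n(A_{[u]})\ge (u/k)^n\lambda_n(A_{[k]})$ for $u\ge k$. Then
$$
\|f\|_1\ge \int_k^\infty (u/k)^n e^{-u}\,du\;\lambda_n(A_{[k]}),
$$
and after raising to the power $k/n$, Hölder/Jensen with respect to the measure $e^{-u}\,du$ should compare $\int_k^\infty (u/k)^k e^{-u}\,du$ with $\bigl(\int_k^\infty(u/k)^n e^{-u}\,du\bigr)^{k/n}$. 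The region $[0,k]$, where $g_k\le g_k(k)$ trivially, gives a mass of at most $1$.

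\textbf{Lower bound.} Restrict the integral to $[k,\infty)$ and use monotonicity, $g_k(u)\ge g_k(k)$, to get $W_{n-k}(f)\ge e^{-k}g_k(k)^k$. Separately, bound $\|f\|_1\le \lambda_n(A_{[k]})\int_0^\infty \max(1,u/k)^n e^{-u}\,du$, whose value is at most roughly $(n/(ek))^n e^{k}$ by Stirling. Raising this to the power $k/n$ gives the constant $(k/(en))^k$ after multiplying by $e^{-k}$.

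\textbf{Main obstacle.} I expect the main obstacle to be the bookkeeping of these Gamma-type integrals, so that the constants come out exactly as $e^{k^2/n}$ and $(k/(en))^k$. In particular, the upper bound needs a Jensen-type comparison of $\int (u/k)^k e^{-u}\,du$ against $\bigl(\int (u/k)^n e^{-u}\,du\bigr)^{k/n}$. I would first try to treat $[0,k]$ and $[k,\infty)$ separately and optimize.
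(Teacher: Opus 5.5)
Your reduction (homogeneity, normalizing $\|f\|_\infty=1$, concavity of $g_k$ and of $h(u)=\lambda_n(A_{[u]})^{1/n}$) and your lower-bound half follow the paper's structure. The upper bound, however, has a genuine gap.

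First, the ``relevant fact'' $\lambda_n(A_{[u]})\ge (u/k)^n\lambda_n(A_{[k]})$ for $u\ge k$ is reversed. Concavity of $h$ with $h(0)\ge 0$ gives $h(u)\le (u/k)h(k)$ for $u\ge k$ (your own fact 3). For $f=I_K$ the function $h$ is constant, and your bound $\|f\|_1\ge \lambda_n(A_{[k]})\int_k^\infty (u/k)^n e^{-u}\,du$ is off by a factor of order $n!$ when $k=1$. So the planned H\"older/Jensen comparison rests on a false inequality.

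Second, and more fundamentally, the pointwise majorant $g_k(u)\le \max(1,u/k)\,g_k(k)$ cannot produce the constant $e^{k^2/n}$. It gives $W_{n-k}(f)\le c_1\|f\|_\infty g_k(k)^k$ with $c_1=\int_0^\infty\max(1,u/k)^k e^{-u}\,du>1$; for example $c_1=1+e^{-1}$ when $k=1$. (Your ``$\Gamma(k+1)k^{-k}\cdot(\text{small})$'' is wrong, since the $[0,k]$ piece alone is $1-e^{-k}$.) The volume side cannot absorb this loss. Chebyshev, $\|f\|_1\ge e^{-k}\|f\|_\infty\lambda_n(A_{[k]}(f))$, is exactly what produces $e^{k^2/n}$. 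Even the best bound obtainable from concavity of $h$ alone, namely $\|f\|_1\ge \|f\|_\infty\lambda_n(A_{[k]}(f))\int_0^\infty\min(1,u/k)^n e^{-u}\,du$ (attained by $h(u)=\min(1,u/k)h(k)$), yields for $k=1$, $n=2$ a constant $\approx 1.88>e^{1/2}\approx 1.65$.

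The missing idea is an integral, not pointwise, estimate: for nonnegative nondecreasing concave $g$,
$\int_0^\infty g^k(u)e^{-u}\,du\le g(1)\cdots g(k)\le g(k)^k$. This is Corollary~\ref{cor1}; the paper proves it by iterating a Jensen step for the weights $g^m u^{q-1}e^{-u}$ together with the integration-by-parts bound of Lemma~\ref{lemma5}.

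A quick alternative proof of that inequality: majorize $g$ by a supporting line $a+su$ at $u=k$, with $s\ge 0$ and $a=g(k)-sk\ge g(0)\ge 0$. Then $\int_0^\infty (a+su)^k e^{-u}\,du=\sum_j\binom{k}{j}a^{k-j}s^j j!\le (a+sk)^k=g(k)^k$, because $j!\le k^j$ for $j\le k$.

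With this inequality the upper bound is immediate: $W_{n-k}(f)\le \|f\|_\infty W_{n-k}(A_{[k]}(f))$, then homogeneity and Chebyshev.

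Your lower-bound route works, but your Stirling value $(n/(ek))^n e^k$ is too small; for $k=1$ the integral is of order $n!\approx\sqrt{2\pi n}\,(n/e)^n$. What you actually need is
$\int_0^\infty\max(1,u/k)^n e^{-u}\,du\le 1+n!/k^n\le (n/k)^n$ for $k<n$. This holds since $k^n+n!\le (n-1)^n+n(n-1)^{n-1}\le n^n$, while $k=n$ is trivial because the ratio equals $1$. The paper obtains the same bound $\|f\|_1\le\|f\|_\infty (n/k)^n\lambda_n(A_{[k]}(f))$ from the integral inequality applied to $g_n$ at $u=n$, together with $g_n(n)\le (n/k)g_n(k)$.
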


We need several auxiliary statements to prove this theorem.

\begin{lemma}
\label{lemma3}
The function $g_k(f)$ defined by the equation \eqref{gk} is: 1) non-negative, 2) non-decreasing, and 3) concave.
\end{lemma}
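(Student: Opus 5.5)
Non-negativity is immediate from the definition, since $W_{n-k}$ of a convex set is a non-negative integral of projection volumes. Monotonicity also follows at once. As $u$ increases, the threshold $\|f\|_\infty e^{-u}$ decreases, so the level sets $A_{[u]}(f)$ increase with respect to inclusion. Projections of nested sets are nested, so $\lambda_k(P_H(A_{[u]}(f)))$ is non-decreasing in $u$ for every $H\in Gr_{n,k}$. Integrating over $Gr_{n,k}$ shows that $W_{n-k}(A_{[u]}(f))$ is non-decreasing, and hence so is its $k$-th root.

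For concavity, the first ingredient is a Minkowski-type inclusion for the level sets. Let $u,v\ge 0$, $s\in(0,1)$, $x\in A_{[u]}(f)$ and $y\in A_{[v]}(f)$. Log-concavity gives
$$
f(sx+(1-s)y)\ge f(x)^s f(y)^{1-s}\ge \|f\|_\infty e^{-(su+(1-s)v)}.
$$
Hence
$$
sA_{[u]}(f)+(1-s)A_{[v]}(f)\subseteq A_{[su+(1-s)v]}(f).
$$
Projection onto $H$ is linear, so the same inclusion holds for the projections:
$$
sP_H A_{[u]}+(1-s)P_H A_{[v]}\subseteq P_H A_{[su+(1-s)v]}.
$$

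The second ingredient is the Brunn–Minkowski inequality in the $k$-dimensional space $H$. Combined with the inclusion above, it shows that
$$
\varphi_H(u):=\lambda_k\big(P_H A_{[u]}(f)\big)^{1/k}
$$
is concave in $u$ for every fixed $H$.

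The remaining and main step is to pass from concavity of each $\varphi_H$ to concavity of $g_k(f)(u)=\big(\frac{\omega_n}{\omega_k}\int \varphi_H(u)^k\,d\nu_{n,k}\big)^{1/k}$. I would use the reverse Minkowski inequality for the $L^k$-norm, which is the triangle inequality. Write $w=su+(1-s)v$. Pointwise concavity gives $\varphi_H(w)\ge s\varphi_H(u)+(1-s)\varphi_H(v)\ge 0$. Taking $L^k(\nu_{n,k})$ norms, which are monotone on non-negative functions, we get
$$
\|\varphi(w)\|_{L^k}\ge \|s\varphi(u)+(1-s)\varphi(v)\|_{L^k}.
$$
In this direction we need a lower bound on the norm of a sum, which the triangle inequality does not provide. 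The obstacle is handled by replacing the $L^k$-average of projections with the classical mixed-volume form of Brunn–Minkowski for quermassintegrals, $W_{n-k}(sK+(1-s)L)^{1/k}\ge sW_{n-k}(K)^{1/k}+(1-s)W_{n-k}(L)^{1/k}$ (see \cite{Sch}, Theorem 7.4.6). Applied with $K=A_{[u]}(f)$ and $L=A_{[v]}(f)$, together with monotonicity of $W_{n-k}$ under inclusion and the set inclusion above, it yields
$$
g_k(f)(w)\ge s\,g_k(f)(u)+(1-s)\,g_k(f)(v).
$$
This is concavity, with the case of empty or lower-dimensional level sets handled by the convention that $W_{n-k}$ vanishes there.

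If the cited Brunn–Minkowski inequality for quermassintegrals is not to be assumed, the fallback is to prove it directly. I would write $W_{n-k}$ as a mixed volume $V(K[k],B[n-k])$ up to a constant, and apply the general Brunn–Minkowski inequality for mixed volumes, which follows from Alexandrov–Fenchel.
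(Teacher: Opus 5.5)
Your proposal is correct and matches the paper's proof: once you drop the per-projection detour (rightly, since for $k>1$ the $L^k$ triangle inequality gives the wrong direction), you use the same argument. That argument is the inclusion $sA_{[u]}(f)+(1-s)A_{[v]}(f)\subseteq A_{[su+(1-s)v]}(f)$ from log-concavity, followed by the Brunn--Minkowski inequality for quermassintegrals and monotonicity of $W_{n-k}$ under inclusion; non-negativity and monotonicity are handled as in the paper.
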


\begin{proof}
Recall that the function $g_k(f)$ has the following form:
$$
g_k(f)(u) = W_{n-k}^{\frac{1}{k}} (A_{\|f\|_{\infty} e^{-u}}(f)).
$$

Property $1)$ follows from the non-negativity of the Quermassintegrals $W_{n-k}$. 
Property $2)$ follows from the monotonicity of Quermassintegrals and the monotonicity of level sets.

Let's prove property $3)$. Let $f(x) \ge t \ge 0$, $f(y) \ge s \ge 0$ and $\alpha \in [0, 1], \beta = 1 - \alpha$, then
$$f(\alpha x + \beta y) \ge f(x)^\alpha f(y)^\beta \ge t^\alpha s^\beta.$$
Therefore,
$$\alpha \{f \ge t\} + \beta \{f \ge s\} \subseteq \{ f \ge t^\alpha s^\beta \},$$
and, equivalently,
$$\alpha A_t(f) + \beta A_s(f) \subseteq A_{t^\alpha s^\beta}(f).$$

We now use the Brunn--Minkowski inequality for Quermassintegrals (see \cite[Theorem 7.4.5]{Sch} and formula (5.31) there):
\begin{equation}
\label{eq2}
W_{n-k}(A+B)^{\frac{1}{k}} \ge W_{n-k}(A)^{\frac{1}{k}} + W_{n-k}(B)^{\frac{1}{k}},
\end{equation}
where $A$, $B$ are convex bodies in $\mathbb{R}^n$ and $k \in \{1, \ldots, n\}$. From the monotonicity of the Quermassintegral $W_{n-k}$ and inequality \eqref{eq2}, it follows that
$$W_{n-k}^{\frac{1}{k}}(A_{t^\alpha s^\beta}(f)) \ge W_{n-k}^{\frac{1}{k}} (\alpha A_t(f) + \beta A_s(f)) \ge \alpha W_{n-k}^{\frac{1}{k}}(A_t(f)) + \beta W_{n-k}^{\frac{1}{k}} (A_s(f)),$$
Substituting $t = \|f\|_{\infty} e^{-u}$ and $s = \|f\|_{\infty} e^{-v}$ in the inequality above, where $u, v \in [0, +\infty]$, we obtain the concavity of the function $g_k(f)$, and the lemma is proved.
\end{proof}

\begin{lemma}
Let $g \colon \mathbb{R}_+ \rightarrow \mathbb{R}_+$ be a non-negative, non-decreasing, concave, continuous at $0$ function. Then it can be uniformly approximated with any accuracy by a $C^1$-smooth non-negative, non-decreasing, concave function $\tilde{g} \colon \mathbb{R}_+ \rightarrow \mathbb{R}_+$.
\end{lemma}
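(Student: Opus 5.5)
We approximate $g$ by a mollification adapted to the half-line. Since $g$ is concave and non-decreasing on $\mathbb{R}_+$ and continuous at $0$, it is continuous on $[0,\infty)$. It is also Lipschitz on every $[\delta,\infty)$ with constant at most $g'_+(\delta)$, which is finite because $g$ is concave and real-valued. Near $0$ there may be no Lipschitz bound (think of $\sqrt{u}$), so we first remove the boundary. Fix $\varepsilon>0$. By continuity at $0$ and monotonicity, choose $\delta>0$ with $g(\delta)-g(0)<\varepsilon$. Then set $h(u):=g(u+\delta)$ for $u\ge 0$. This $h$ is non-negative, non-decreasing and concave, and
$$0\le h(u)-g(u)\le g(u+\delta)-g(u)\le g(\delta)-g(0)<\varepsilon .$$
The last inequality holds because increments of a concave function over intervals of fixed length are non-increasing. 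Moreover, $h$ is the restriction to $[0,\infty)$ of the function $u\mapsto g(u+\delta)$, which is concave, non-decreasing and finite on $(-\delta,\infty)$, and is $L$-Lipschitz on $[-\delta/2,\infty)$ with $L:=g'_+(\delta/2)<\infty$.

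Next we mollify. Take a smooth non-negative kernel $\rho$ supported in $[0,1]$ with $\int\rho=1$, and for $0<\eta<\delta/2$ put
$$\tilde g(u):=\int_0^1 h(u-\eta s)\rho(s)\,ds .$$
We shift only to the left, so only the values of $h$ on $[-\eta,\infty)\subset[-\delta/2,\infty)$ enter. We then check the required properties one by one.
\begin{itemize}
\item Each $u\mapsto h(u-\eta s)$ is concave and non-decreasing, and non-negativity is preserved since $h(u-\eta s)\ge g(0)\ge 0$. Averaging against the non-negative weight $\rho$ preserves all three properties, so $\tilde g$ is non-negative, non-decreasing and concave.
\item $\tilde g$ is $C^\infty$, hence $C^1$. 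After the substitution $v=u-\eta s$ we get $\tilde g(u)=\eta^{-1}\int h(v)\rho((u-v)/\eta)\,dv$, which is a convolution of the locally bounded function $h$ with a smooth compactly supported kernel.
\item By the Lipschitz bound, $|\tilde g(u)-h(u)|\le\int_0^1|h(u-\eta s)-h(u)|\rho(s)\,ds\le L\eta$.
\end{itemize}
Choosing $\eta<\min(\delta/2,\varepsilon/L)$ then gives $\sup_{u\ge 0}|\tilde g-g|<2\varepsilon$.

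The main obstacle is the possible blow-up of the derivative of $g$ at $0$, which prevents uniform convergence of a naive mollification near the boundary. Both the shift by $\delta$ and the one-sided kernel are there to handle it. Uniformity on the unbounded part of the half-line comes for free from the global Lipschitz bound on $[-\delta/2,\infty)$, which concavity together with monotonicity guarantees.
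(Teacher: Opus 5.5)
Your proof is correct, and it reaches the result by a different construction from the paper's, although the two share the same key estimate. Both arguments choose $\delta$ with $g(\delta)-g(0)\le\varepsilon$ and use concavity to bound every increment of $g$ over an interval of length $\delta$ by the same quantity.

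The paper uses that bound differently. It shows that the piecewise linear interpolant $\hat g$ of $g$ at the nodes $N\delta$, $N=0,1,2,\dots$, satisfies $|\hat g-g|\le\varepsilon$ on all of $\mathbb{R}_+$, because both functions lie in $[g(N\delta),g(N\delta)+\varepsilon]$ on each $[N\delta,(N+1)\delta]$. It then rounds every corner of $\hat g$ with a small circular arc. In that route the interpolation itself tames the possibly infinite slope at $0$, since every slope of $\hat g$ lies in $[0,\varepsilon/\delta]$.

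You instead use the increment bound to justify the shift $h(u)=g(u+\delta)$. The shift removes the boundary singularity and makes $h$ globally $L$-Lipschitz on $[-\delta/2,\infty)$. You then smooth with a one-sided mollifier, so that only values inside the domain are used.

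Your route buys three things. The approximant is $C^\infty$, and the error bound $L\eta$ is explicit. Concavity, monotonicity and non-negativity are preserved automatically, because $\tilde g$ is an average of translates of $h$. The paper's corner-rounding step leaves two checks implicit: that tangent circular arcs keep the function concave and non-decreasing, and that one arc size works uniformly for infinitely many corners. Both are true, because the slopes of $\hat g$ are uniformly bounded in $[0,\varepsilon/\delta]$.

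The paper's route, in turn, is more elementary and geometric: it needs no convolution or differentiation under the integral sign.
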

\begin{proof}
We firstly show that the function $g$ can be approximated by piecewise linear functions. 
The concavity of function $g$ implies its continuity on $\mathbb{R}_+$
Let's find $\delta > 0$ such that $g(\delta) - g(0) \le \varepsilon$.
From the non-decreasing condition, it follows that
$$g(0) \le g(u) \le g(0) + \varepsilon \quad \forall u \in [0, \delta].$$

From the concavity condition, we obtain that $g(2 \delta) \le g(\delta) + \frac{g(\delta) - g(0)}{\delta} \delta \le g(\delta) + \varepsilon$. Therefore,
$$g(\delta) \le g(u) \le g(\delta) + \varepsilon \quad \forall u \in [\delta, 2 \delta].$$
Similarly
$$g(N\delta) \le g(u) \le g(N\delta) + \varepsilon \quad \forall N \in \mathbb{N}\cup\{0\}, \, u \in [N\delta, (N+1)\delta].$$

The same condition holds for the piecewise linear function $\hat{g}$ with vertices at $(N\delta, g(N\delta))$, $N \in \mathbb{N}\cup{0}$, i.e.
$$\hat g(N\delta) \le \hat g(u) \le \hat g(N\delta) + \varepsilon \quad \forall N \in \mathbb{N}\cup\{0\} \, \forall u \in [N\delta, (N+1)\delta].$$
Therefore, the piecewise linear function $\hat g$ uniformly approximates the function $g$ with accuracy $\varepsilon$ on the entire set $\mathbb{R}_+$.

Now, using the piecewise linear function $\hat g$, we construct the $C^1$-smooth function $\tilde{g}$ as follows: we replace the segments of the graph of $\hat g$ containing corners with circular arcs in such a way that the resulting function differs from the original one by no more than $\varepsilon$ in the $\| \cdot \|_{\infty}$ norm.
Then, $\tilde{g}$ will differ from $g$ by no more than $2 \varepsilon$. The lemma is proved.
\end{proof}

\begin{lemma}
\label{lemma5}
Let $m \ge 0, q > 0$, and let $g\colon \mathbb{R}_+ \rightarrow \mathbb{R}_+$ be a non-negative, non-decreasing, concave function. Then
$$
q \int_0^{+\infty} g^m(u) u^{q-1} e^{-u} \, du \le
\int_0^{+\infty} g^m(u) u^q e^{-u} \, du \le
(m+q) \int_0^{+\infty} g^m(u) u^{q-1} e^{-u} \, du.
$$
In particular, for $g \not \equiv 0$,
$$q \le \frac{\int_0^{+\infty} g^m(u) u^q e^{-u} \, du}{\int_0^{+\infty} g^m(u) u^{q-1} e^{-u} \, du} \le m+q.$$
\end{lemma}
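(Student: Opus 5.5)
The plan is to integrate by parts in the middle integral, moving the factor $e^{-u}$ onto the rest of the integrand, and then to bound the resulting derivative of $g^m$ from both sides. This works once $g$ is smooth enough, so I would first reduce to a $C^1$ function $g$.

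\emph{Reduction to smooth $g$.} A concave $g$ on $\mathbb{R}_+$ is continuous on $(0,\infty)$. Since $g$ is non-decreasing, $g(0^+)=\lim_{u\to 0^+}g(u)$ exists and $g(0)\le g(0^+)$. Replacing $g(0)$ by $g(0^+)$ does not change any of the integrals. The modified function is still non-negative, non-decreasing and concave: in the concavity inequality, passing to the limit $x\to0^+$ gives the required inequality at $0$. It is also continuous at $0$, so the preceding approximation lemma applies. It gives $C^1$ functions $\tilde g_j$ with the same three properties and $\tilde g_j\to g$ uniformly on $\mathbb{R}_+$.

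To pass to the limit in all three integrals I would use dominated convergence. Concavity gives at most linear growth, $g(u)\le g(0)+g'_+(1)\,u+g(1)$ for $u\ge 1$, so $|\tilde g_j|^m\le (a+bu)^m$ with constants independent of $j$. This bound times $u^{q-1}e^{-u}$ or $u^q e^{-u}$ is integrable because $q>0$. Pointwise, $\tilde g_j^m\to g^m$; for $m=0$ this is trivial. Hence it suffices to prove the inequalities for $C^1$ functions $g$.

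\emph{Integration by parts.} For $C^1$ $g$ I would write
$$
\int_0^{\infty} g^m(u)u^q e^{-u}\,du=\int_0^\infty \bigl(g^m(u)u^q\bigr)'e^{-u}\,du
= q\int_0^\infty g^m u^{q-1}e^{-u}\,du + m\int_0^\infty g^{m-1}g'\,u^q e^{-u}\,du .
$$
The boundary terms vanish: at $0$ because $u^q\to0$ and $g$ is bounded near $0$, and at $\infty$ because $g^m u^q$ grows polynomially. The lower bound follows at once, since $g'\ge 0$ and $g\ge0$ make the second term non-negative. Where $g=0$ and $m<1$, the term $g^{m-1}g'$ should be understood as $(g^m)'$. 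In that case it is simplest to note that $(g^m)'\ge0$ for a non-decreasing $g$; alternatively, approximate by $g+\varepsilon$.

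For the upper bound I would use concavity together with $g(0)\ge0$: for $C^1$ concave $g$,
$$
g(u)\ge g(u)-g(0)=\int_0^u g'(s)\,ds\ge u\,g'(u),
$$
since $g'$ is non-increasing. Then $m\,g^{m-1}g'\,u^q\le m\,g^m u^{q-1}$, and the second term is at most $m\int_0^\infty g^m u^{q-1}e^{-u}\,du$. Adding the first term yields the factor $m+q$.

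The ratio statement follows by division, because for $g\not\equiv0$ non-decreasing the denominator is positive. The only step I expect to need real care is the justification of the smoothing and limit passage, in particular the behaviour at $u=0$ and the uniform integrable bound. The core inequality $u g'(u)\le g(u)$ is immediate.
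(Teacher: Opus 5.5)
Your proposal is correct and follows essentially the same route as the paper: reduce to $C^1$ functions $g$ via the approximation lemma, integrate by parts, drop the non-negative $g'$-term for the lower bound, and use $u\,g'(u)\le g(u)-g(0)\le g(u)$ for the upper bound. Your extra care fills in details the paper leaves implicit: redefining $g(0)$ as $g(0^+)$ so that the approximation lemma's continuity-at-$0$ hypothesis holds, and the dominated-convergence limit passage using linear growth. Your concern about $g=0$ when $m<1$ is in fact moot, since a non-negative concave $g$ that is positive somewhere is positive on all of $(0,\infty)$.
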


\begin{proof}
Without loss of generality we consider the case of a smooth function $g$.
Otherwise, we approximate it with smooth functions by previous lemma.
Integrating by parts, we get
\begin{align*}
\int_0^{+\infty} g^m(u) u^q e^{-u} \, du &= -\int_0^{+\infty} g^m(u) u^q \, d(e^{-u}) = \int_0^{+\infty} e^{-u} \, d(u^q g^m(u)) =\\
&= \int_0^{+\infty} e^{-u} g^m(u) q u^{q-1} \, du + \int_0^{+\infty} e^{-u} m g^{m-1}(u) g'(u) u^q \, du.
\end{align*}

Now the lower bound follows from the non-negativity of the second term above. To prove the upper bound, we use the inequality $g'(x) \le \frac{g(x) - g(0)}{x} \le \frac{g(x)}{x}$ for $x>0$ in the second term above and obtain
\begin{align*}
&\int_0^{+\infty} e^{-u} g^m(u) q u^{q-1} \, du + \int_0^{+\infty} e^{-u} m g^{m-1}(u) g'(u) u^q \, du \le\\
&\le \int_0^{+\infty} e^{-u} g^m(u) q u^{q-1} \, du + \int_0^{+\infty} e^{-u} m g^m(u) u^{q-1} \, du =\\
&= (m+q) \int_0^{+\infty} g^m(u) u^{q-1} e^{-u} \, du.
\end{align*}
The lemma is proved.
\end{proof}

\begin{corollary}
\label{cor0}
Let $\forall m \ge 0, q > 0$, and let $g\colon \mathbb{R}_+ \rightarrow \mathbb{R}_+$ be a non-negative, non-decreasing, concave function.
Then the following inequality holds
$$
\int_0^{+\infty} g^{m+1}(u) u^{q-1} e^{-u} \, du \le \frac{g(q)}{q} \int_0^{+\infty} g^{m}(u) u^{q} e^{-u} \, du.
$$
\end{corollary}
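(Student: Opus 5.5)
Split the integral on the left at the point $u=q$ and use a different estimate of $g^{m+1}$ on each piece. The only properties needed are the two consequences of concavity, monotonicity and non-negativity of $g$:
\[
g(u)\le g(q)\ \text{ for } u\le q,\qquad\qquad g(u)\le \tfrac{u}{q}\,g(q)\ \text{ for } u\ge q .
\]
The second holds because $g(u)/u$ is non-increasing on $(0,+\infty)$ when $g$ is concave with $g(0)\ge 0$. Combining them,
\[
g(u)\le \frac{g(q)}{q}\,\max\{q,u\}\qquad\text{for all } u\ge 0 .
\]
Writing $g^{m+1}=g\cdot g^m$ and inserting this bound gives
\[
\int_0^{+\infty} g^{m+1}(u)\,u^{q-1}e^{-u}\,du\le \frac{g(q)}{q}\left( q\int_0^{q} g^m u^{q-1}e^{-u}\,du+\int_q^{+\infty} g^m u^{q}e^{-u}\,du\right).
\]

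It remains to show that the bracket is at most $\int_0^{+\infty} g^m(u)\,u^q e^{-u}\,du$. Write the right-hand integral as the sum of its pieces over $[0,q]$ and $[q,+\infty)$; the $[q,+\infty)$ pieces agree. So the claim reduces to
\[
\int_q^{+\infty} g^m u^{q-1}(u-q)\,e^{-u}\,du\;\ge\;\int_0^{q} g^m u^{q-1}(q-u)\,e^{-u}\,du,
\]
which is equivalent to
\[
\int_0^{+\infty} g^m(u)\,u^{q-1}(u-q)\,e^{-u}\,du\ge 0 .
\]
This last inequality is exactly the lower bound of Lemma \ref{lemma5}: $q\int g^m u^{q-1}e^{-u}\le \int g^m u^q e^{-u}$. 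That finishes the proof.

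The whole argument is a one-line pointwise estimate followed by Lemma \ref{lemma5}. The only real step is spotting that the gap between the two regimes, $u<q$ and $u>q$, is absorbed precisely by the lower bound of Lemma \ref{lemma5}.

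If one wants an independent check of that lower bound, integrate by parts. Since $\frac{d}{du}(u^q e^{-u})=u^{q-1}(q-u)e^{-u}$, we get
\[
\int_0^{+\infty} g^m(u)\,u^{q-1}(q-u)\,e^{-u}\,du=-\int_0^{+\infty} u^q e^{-u}\,d\bigl(g^m(u)\bigr)\le 0,
\]
because $g^m$ is non-decreasing. The boundary terms vanish: $g$ grows at most linearly, and $u^q\to 0$ as $u\to 0$ with $q>0$.

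Minor care is needed in the degenerate cases. If $g(q)=0$, monotonicity and non-negativity force $g\equiv 0$ on $[0,q]$, and then concavity forces $g\equiv 0$ on all of $\mathbb{R}_+$, so both sides vanish. The case $m=0$ needs no special treatment, since the lower bound of Lemma \ref{lemma5} is then just $q\Gamma(q)\le\Gamma(q+1)$, an equality.
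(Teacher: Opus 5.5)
There is a genuine gap at the step ``It remains to show that the bracket is at most $\int_0^{+\infty} g^m u^q e^{-u}\,du$.'' The bracket equals $\int_0^{+\infty} g^m(u)\max\{q,u\}\,u^{q-1}e^{-u}\,du$. Since $\max\{q,u\}\ge u$, it is \emph{at least} $\int_0^{+\infty} g^m u^q e^{-u}\,du$, not at most. Once the $[q,+\infty)$ pieces cancel, as you note yourself, the difference is $\int_0^q g^m u^{q-1}(q-u)e^{-u}\,du\ge 0$. This is strictly positive unless $g^m$ vanishes a.e.\ on $[0,q]$.

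Your ``reduced'' inequality trades a deficit on $[0,q]$ against a surplus on $[q,+\infty)$, but that does not follow. On $[q,+\infty)$ you bounded $g(u)$ by $\frac{g(q)}{q}u$, which is exactly the target integrand, so no surplus remains. Concretely, take $g\equiv 1$, $m=0$, $q=1$. The bracket is $(1-e^{-1})+2e^{-1}=1+e^{-1}$, while $\int_0^{+\infty}u e^{-u}\,du=1$. The corollary is an equality in this case, so no chain passing through the envelope $\frac{g(q)}{q}\max\{q,u\}$ can close. The underlying issue is that the claim is not pointwise: on $[0,q]$ one has $g(u)\ge \frac{u}{q}g(q)$, so the loss there must be paid for on $[q,+\infty)$.

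The repair is to use the supporting line of $g$ at $q$ instead of the envelope. Concavity and monotonicity give a slope $s$ with $0\le s\le \frac{g(q)-g(0)}{q}\le\frac{g(q)}{q}$ and $g(u)\le g(q)+s(u-q)$ for all $u\ge 0$. Hence
\[
\int_0^{+\infty} g^{m+1}u^{q-1}e^{-u}\,du\le g(q)\int_0^{+\infty} g^m u^{q-1}e^{-u}\,du+s\int_0^{+\infty} g^m u^{q-1}(u-q)e^{-u}\,du .
\]
The last integral is non-negative by the lower bound of Lemma~\ref{lemma5}; this is where your reduced inequality genuinely belongs. So $s$ may be replaced by the larger $\frac{g(q)}{q}$, which gives exactly $\frac{g(q)}{q}\int_0^{+\infty} g^m u^q e^{-u}\,du$.

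This corrected version is a pointwise variant of the paper's proof. The paper applies Jensen's inequality to the measure proportional to $g^m u^{q-1}e^{-u}\,du$, evaluating $g$ at the barycenter $y\ge q$, and then uses $g(y)\le\frac{y}{q}g(q)$. The supporting-line version takes the tangent at $q$ directly and uses Lemma~\ref{lemma5} only for the sign of one integral.
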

\begin{proof}
If $g \equiv 0$ everywhere on $\mathbb{R}_+$, the statement is trivial. Assume now that $g \not \equiv 0$.

First, using Jensen's inequality for the measure with the density $g^m(u) u^{q-1} e^{-u} \, du$, we get
$$
\int_0^{+\infty} g^{m+1}(u) u^{q-1} e^{-u} \, du \le
g\left( \frac{\int_0^{+\infty} g^m(u) u^q e^{-u} \, du}{\int_0^{+\infty} g^m(u) u^{q-1} e^{-u} \, du} \right) \int_0^{+\infty} g^m(u) u^{q-1} e^{-u} \, du.
$$
By the lower estimate from Lemma \ref{lemma5} we get $x \le y$, where
$$
x = q, \quad y = \frac{\int_0^{+\infty} g^m(u) u^q e^{-u} \, du}{\int_0^{+\infty} g^m(u) u^{q-1} e^{-u} \, du}.
$$
Now the inequality $g(y) \le g(x) \frac{y}{x}$ for $0<x\le y$ yields
$$
g\left( \frac{\int_0^{+\infty} g^m(u) u^q e^{-u} \, du}{\int_0^{+\infty} g^m(u) u^{q-1} e^{-u} \, du} \right) \le \frac{g(q)}{q}
\frac{\int_0^{+\infty} g^m(u) u^q e^{-u} \, du}{\int_0^{+\infty} g^m(u) u^{q-1} e^{-u} \, du},
$$
implying the statement of the corollary.
\end{proof}

\begin{corollary}
\label{cor1}
Let $k \in \mathbb{N}, p \ge 0$, and let $g\colon \mathbb{R}_+ \rightarrow \mathbb{R}_+$ be a non-negative, non-decreasing, concave function.
Then
$$g^k(p) e^{-p} \le \int_0^{+\infty} g^k(u) e^{-u} \, du \le g(1) \cdot g(2) \cdot ... \cdot g(k) \le g^k(k).$$
\end{corollary}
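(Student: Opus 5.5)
The two bounds are handled separately.

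For the lower bound $g^k(p)e^{-p}\le\int_0^{+\infty} g^k(u)e^{-u}\,du$, I would use only that $g$ is non-negative and non-decreasing. For $u\ge p$ we have $g(u)\ge g(p)$, so
$$\int_0^{+\infty} g^k(u)e^{-u}\,du\ge \int_p^{+\infty} g^k(u)e^{-u}\,du\ge g^k(p)\int_p^{+\infty}e^{-u}\,du=g^k(p)e^{-p}.$$

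For the upper bound, I would iterate Corollary \ref{cor0}. The quantities to track are
$$I_j:=\int_0^{+\infty} g^{k-j}(u)\,u^{j}e^{-u}\,du,\qquad j=0,\dots,k.$$
Then $I_0=\int_0^{+\infty} g^k(u)e^{-u}\,du$ is the integral we want to bound, and $I_k=\int_0^{+\infty} u^k e^{-u}\,du=k!$.

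Apply Corollary \ref{cor0} with $m=k-j-1\ge 0$ and $q=j+1>0$, for $j=0,\dots,k-1$. This gives
$$I_j=\int_0^{+\infty} g^{m+1}(u)u^{q-1}e^{-u}\,du\le \frac{g(j+1)}{j+1}\int_0^{+\infty} g^{m}(u)u^{q}e^{-u}\,du=\frac{g(j+1)}{j+1}\,I_{j+1}.$$
Chaining these $k$ inequalities yields
$$I_0\le \frac{g(1)g(2)\cdots g(k)}{k!}\,I_k=g(1)\cdots g(k).$$
The last inequality $g(1)\cdots g(k)\le g^k(k)$ then follows because $g$ is non-decreasing, so each factor satisfies $g(j)\le g(k)$.

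The main points to check are small. First, the case $m=0$ (at $j=k-1$) must be covered by Corollary \ref{cor0}; it is, since that corollary allows $m\ge 0$. Second, some integrals could be infinite. They are not: concavity gives $g(u)\le g(0)+Cu$, so every $I_j$ is finite. The iteration itself is mechanical once the indices are set up as above.
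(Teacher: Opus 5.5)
Your proof is correct and is essentially the paper's own argument: monotonicity on $[p,+\infty)$ for the lower bound, and Corollary~\ref{cor0} chained with $m=k-j-1$, $q=j+1$ down to $\int_0^{+\infty}u^ke^{-u}\,du=k!$ for the upper bound. (Minor nit in your finiteness remark: concavity gives $g(u)\le a+bu$ via a supporting line at an interior point, but not necessarily with $a=g(0)$, as $g(u)=\sqrt{u}$ shows; this does not affect the argument.)
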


\begin{proof}
For the lower bound, we only need non-negativity and non-decreasing properties. Indeed,
$$\int_0^{+\infty} g^k(u) e^{-u} \, du \ge \int_{p}^{+\infty} g^k(u) e^{-u} \, du \ge g^k(p) e^{-p}.$$

Now let's prove the upper bound. Applying Corollary \ref{cor0} inductively, we obtain
\begin{multline*}
\int_0^{+\infty} g^k(u) e^{-u} \, du \le g(1) \int_0^{+\infty} g^{k-1}(u) u e^{-u} \, du.
\le g(1) \frac{g(2)}{2} \int_0^{+\infty} g^{k-2}(u) u^2 e^{-u} \, du \le
... \le\\
\le g(1) \frac{g(2)}{2} ... \frac{g(k)}{k} \int_0^{+\infty} u^k e^{-u} \, du = g(1) g(2) ... g(k).
\end{multline*}
The lemma is proved.
\end{proof}

\begin{proposition}
\label{cor2}
Let $k \in \{1, \ldots, n\}$ and $f \in Q^n$, then
$$\frac{1}{e} W_{n-k}^{\frac{1}{k}}(A_{[k]}(f)) \le \left(\frac{1}{\|f\|_{\infty}} W_{n-k}(f)\right)^{\frac{1}{k}} \le W_{n-k}^{\frac{1}{k}}(A_{[k]}(f)).$$
\end{proposition}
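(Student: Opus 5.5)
Our plan is to combine the representation \eqref{eq1} with Corollary \ref{cor1}, applied to $g = g_k(f)$. By Lemma \ref{lemma3}, $g_k(f)$ is non-negative, non-decreasing and concave on $\mathbb{R}_+$, so Corollary \ref{cor1} applies. By \eqref{eq1},
$$
\frac{1}{\|f\|_{\infty}} W_{n-k}(f) = \int_0^{+\infty} g_k(f)^k(u) e^{-u}\, du .
$$
By the definition \eqref{gk}, $g_k(f)^k(k) = W_{n-k}(A_{[k]}(f))$.

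For the upper bound we take the rightmost inequality of Corollary \ref{cor1}:
$$
\int_0^{+\infty} g_k(f)^k(u) e^{-u}\, du \le g_k(f)^k(k) = W_{n-k}(A_{[k]}(f)).
$$
Taking $k$-th roots gives $\bigl(\|f\|_\infty^{-1} W_{n-k}(f)\bigr)^{1/k} \le W_{n-k}^{1/k}(A_{[k]}(f))$.

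For the lower bound we use the left inequality of Corollary \ref{cor1} with $p = k$:
$$
\int_0^{+\infty} g_k(f)^k(u) e^{-u}\, du \ge g_k(f)^k(k)\, e^{-k} = e^{-k} W_{n-k}(A_{[k]}(f)).
$$
Taking $k$-th roots turns the factor $e^{-k}$ into $e^{-1}$, which is exactly the claimed left-hand side.

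The main technical point is justifying the hypotheses. Corollary \ref{cor1} rests on Lemma \ref{lemma5}, whose proof approximates $g$ by smooth functions via the approximation lemma. That lemma requires continuity at $0$, so we would check that $u \mapsto W_{n-k}(A_{\|f\|_\infty e^{-u}}(f))$ is continuous at $u=0$. If it is not, we would instead apply the argument on $(0,\infty)$ or replace $g$ by its concave continuous modification at $0$, which changes neither the integral nor the value at $k$. Finiteness of the quantities also needs a word: $f \in Q^n$ gives bounded level sets for $t>0$ and $\|f\|_\infty < \infty$, so $g_k(f)$ is finite. Everything else is a direct substitution.
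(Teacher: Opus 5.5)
Your proof is correct and follows essentially the same route as the paper: substitute $g=g_k(f)$ into \eqref{eq1}, take the upper bound $g^k(k)$ from Corollary \ref{cor1}, take the lower bound $g^k(p)e^{-p}$ with $p=k$, and extract $k$-th roots. Your added check of continuity of $g_k(f)$ at $0$ (or replacing $g(0)$ by $g(0+)$, which changes neither the integral nor $g(k)$) is a sensible detail that the paper does not spell out.
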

\begin{proof}
Recall that by the formula \eqref{eq1},
$$
\frac{1}{\|f\|_{\infty}} W_{n-k}(f) = \int_0^{\infty} g_k(f)^k(u) e^{-u} \, du.
$$
Using Corollary \ref{cor1} with the function $g = g_k(f)$ and $p=k$, we get
$$\frac{1}{\|f\|_{\infty}} W_{n-k}(f) = \int_0^{\infty} g_k(f)^k(u) e^{-u} \, du \le g_k(f)^k(k) = W_{n-k}(A_{[k]}(f))$$
where the last equality is the definition of the function $g_k$ from \eqref{gk}.
By the same corollary we have
$$\frac{1}{\|f\|_{\infty}} W_{n-k}(f) \ge g_k(f)^k(k) e^{-k} = W_{n-k}(A_{[k]}(f)) e^{-k}.$$
The proposition is proved.
\end{proof}

\begin{lemma}
\label{lemma1}
Let $k \in \{1, \ldots, n\}$ and $f \in Q^n$, then
$$\left( \frac{k}{n} \right)^{n} \frac{1}{\|f\|_{\infty}}\|f\|_1 \le \lambda_n(A_{[k]}(f)) \le e^k \frac{1}{\|f\|_{\infty}} \|f\|_1$$
\end{lemma}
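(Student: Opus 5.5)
The plan is to apply the same machinery to volume, since $\lambda_n = W_0$ is the case $k=n$. Set $h(u) := \lambda_n(A_{[u]}(f))^{\frac{1}{n}} = g_n(f)(u)$. By Lemma \ref{lemma3} with $k=n$, $h$ is non-negative, non-decreasing and concave on $\mathbb{R}_+$. By the layer-cake formula \eqref{eq1} with $k=n$ we have
$$
\frac{\|f\|_1}{\|f\|_\infty} = \int_0^{\infty} h^n(u) e^{-u}\,du .
$$
Both bounds of the lemma will be comparisons between this integral and $h^n(k) = \lambda_n(A_{[k]}(f))$.

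\textbf{Upper bound.} Apply the lower estimate of Corollary \ref{cor1} with exponent $n$ and $p=k$:
$$
h^n(k) e^{-k} \le \int_0^\infty h^n(u) e^{-u}\,du = \frac{\|f\|_1}{\|f\|_\infty}.
$$
This gives $\lambda_n(A_{[k]}(f)) \le e^k \|f\|_1/\|f\|_\infty$ directly.

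\textbf{Lower bound.} Concavity together with non-negativity gives $h(u) \le \frac{u}{k} h(k)$ for $u \ge k$, since $h(k) \ge h(0) + \frac{k}{u}(h(u) - h(0))$. For $u \le k$, monotonicity gives $h(u) \le h(k) \le \frac{n}{k} h(k)$. Hence $h(u) \le h(k)\max\{1, u/k\}$ for all $u$, and
$$
\int_0^\infty h^n e^{-u}\,du \le h^n(k)\Bigl(\int_0^k e^{-u}\,du + k^{-n}\int_k^\infty u^n e^{-u}\,du\Bigr) \le h^n(k)\Bigl(1 + \frac{n!}{k^n}\Bigr).
$$
This is of order $n!/k^n$ rather than the required $(n/k)^n$. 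Since $n! \le n^n$, the bound becomes $h^n(k)\bigl(1 + (n/k)^n\bigr)$, which still carries an unwanted extra $1$.

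To remove the $1$, I would use the chain from Corollary \ref{cor1} with exponent $n$:
$$
\int_0^\infty h^n e^{-u}\,du \le h(1)h(2)\cdots h(n).
$$
For $j \le k$ bound $h(j) \le h(k)$, and for $j > k$ bound $h(j) \le \frac{j}{k}h(k)$. This gives
$$
\int_0^\infty h^n e^{-u}\,du \le h^n(k)\prod_{j=k+1}^n \frac{j}{k} = h^n(k)\,\frac{n!}{k!\,k^{n-k}} \le h^n(k)\Bigl(\frac{n}{k}\Bigr)^n,
$$
where the last step uses $n!/k! \le n^{n-k} \le n^n/k^k$, i.e.\ $k^k \le n^k$. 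Rearranging yields $\lambda_n(A_{[k]}(f)) \ge (k/n)^n\|f\|_1/\|f\|_\infty$.

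The main obstacle is exactly this lower bound: the crude split loses an additive constant, while the product form of Corollary \ref{cor1} absorbs it. The remaining verification is the elementary inequality $\prod_{j=k+1}^n (j/k) \le (n/k)^n$, which holds because each of the $n-k$ factors is at most $n/k$ and $n/k \ge 1$.
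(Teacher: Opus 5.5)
Your proof is correct and follows the paper's route. The upper bound is the Chebyshev estimate, which is exactly what the lower inequality of Corollary \ref{cor1} amounts to. The lower bound combines Corollary \ref{cor1} for $h=g_n(f)$ with the concavity estimate $h(u)\le \frac{u}{k}h(k)$ for $u\ge k$.

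The paper takes the last link directly, $\int_0^\infty h^n(u)e^{-u}\,du\le h^n(n)$, and then uses $h(n)\le \frac{n}{k}h(k)$, so your crude-split detour is unnecessary. Your factor-by-factor bound does give the slightly sharper constant $\frac{k!\,k^{n-k}}{n!}$ in place of $(k/n)^n$.
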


\begin{proof}
The upper bound follows from Chebyshev's inequality. Let's explain the lower bound.
By Corollary \ref{cor1} we get
$$\frac{1}{\|f\|_{\infty}} \|f\|_1 = \frac{1}{\|f\|_{\infty}} W_{n-n}(f) = \int_0^{+\infty} g_n(f)^n(u) e^{-u} \, du \le g_n(f)^n(n).$$

Furthermore, from the concavity of the function $g_n(f)$ (see Lemma \ref{lemma3}), we obtain
$$g_n(f)(n) \le \frac{n}{k} g_n(f)(k) = \frac{n}{k} \lambda_n(A_{[k]}(f))^{\frac{1}{n}}.$$

Therefore,
$$\lambda_n(A_{[k]}(f)) \ge \left( \frac{k}{n} \right)^{n} \frac{1}{\|f\|_{\infty}}\|f\|_1.$$
The lemma is proved.
\end{proof}

\textbf{\textit{Proof of Theorem \ref{prop2}.}}
Proposition \ref{cor2} states that
$$\frac{1}{e^k} W_{n-k}(A_{[k]}(f)) \le \frac{1}{\|f\|_{\infty}} W_{n-k}(f) \le W_{n-k} (A_{[k]}(f)).$$

The upper bound from Lemma \ref{lemma1} gives
$$W_{n-k}(A_{[k]}(f)) = W_{n-k}\left(
\frac{A_{[k]}(f)}{\lambda_n(A_{[k]}(f))^{\frac{1}{n}}}\right) \lambda_n(A_{[k]}(f))^{\frac{k}{n}} \le
W_{n-k}\left(
\frac{A_{[k]}(f)}{\lambda_n(A_{[k]}(f))^{\frac{1}{n}}}\right) \frac{e^{\frac{k^2}{n}}}{\|f\|_{\infty}^{\frac{k}{n}}} \|f\|_{1}^{\frac{k}{n}}.$$

The lower bound from Lemma \ref{lemma1} gives
$$W_{n-k}(A_{[k]}(f)) = W_{n-k}\left(
\frac{A_{[k]}(f)}{\lambda_n (A_{[k]}(f))^{\frac{1}{n}}}\right) \lambda_n(A_{[k]}(f))^{\frac{k}{n}} \ge
W_{n-k}\left(
\frac{A_{[k]}(f)}{\lambda_n(A_{[k]}(f))^{\frac{1}{n}}}\right) \left( \frac{k}{n} \right)^k \frac{1}{\|f\|_{\infty}^{\frac{k}{n}}} \|f\|_{1}^{\frac{k}{n}},$$
and the theorem is proved.
\qed

\begin{remark}
{\rm In Theorem \ref{prop2}, the following two-sided estimate was obtained
$$
\frac{k}{e n} \le \left( \frac{W_{n-k}(f)}{\|f\|_1^{\frac{k}{n}} \|f\|_{\infty}^{\frac{n-k}{n}} W_{n-k} \left(
\frac{A_{[k]}(f)}{\lambda_n(A_{[k]}(f))^{\frac{1}{n}}}\right)}\right)^{\frac{1}{k}} \le e.$$
Note that the lower bound is optimal up to multiplication by an absolute constant. To show this we consider the function $f(x) = e^{-|x|}$. Let's calculate the necessary values:
\begin{align*}
&\|f\|_{\infty} = 1, \\
&\|f\|_1 = \int_{\mathbb{R}^n} e^{-|x|} \, dx = \int_{S^{n-1}} \int_0^{+\infty} e^{-r} r^{n-1} \, dr \, d\theta = n \omega_n (n-1)! = \omega_n n! , \\
& \lambda_n(A_{e^{-u}}(f)) = \lambda_n(\{ e^{-|x|} \ge e^{-u} \}) = \lambda_n(\{ |x| \le u \}) = \omega_n u^n, \\
& W_{n-k} (A_{e^{-u}}(f)) = \frac{\omega_n}{\omega_k} \int_{Gr_{n,k}} \lambda_k( P_H (A_{e^{-u}}(f)) ) \, \nu_{n,k}(dH) = \omega_n u^k = g_k(f)^k(u), \\
& W_{n-k}(f) = \int_0^{+\infty} g_k(f)^k(u) e^{-u} \, du = \omega_n \int_0^{+\infty} u^k e^{-u} \, du = \omega_n k!, \\
& W_{n-k}\left( \frac{A_{e^{-k}}(f)}{\lambda_n(A_{e^{-k}}(f))^{\frac{1}{n}}} \right) = \frac{W_{n-k}(A_{e^{-k}}(f))}{\lambda_n(A_{e^{-k}}(f))^{\frac{k}{n}}} = \frac{\omega_n k^k}{(\omega_n k^n)^{\frac{k}{n}}} = \frac{\omega_n}{\omega_n^{\frac{k}{n}}},
\end{align*}
where $\omega_n = \frac{\pi^{\frac{n}{2}}}{\Gamma(\frac{n}{2} + 1)}$ is the volume of the $n$-dimensional unit ball. 
Substituting this, we have
$$
\left( \frac{W_{n-k}(f)}{\|f\|_1^{\frac{k}{n}} \|f\|_{\infty}^{\frac{n-k}{n}} W_{n-k} \left(
\frac{A_{[k]}(f)}{\lambda_n(A_{[k]}(f))^{\frac{1}{n}}}\right)}\right)^{\frac{1}{k}} = \left( \frac{\omega_n k!}{(\omega_n n!)^{\frac{k}{n}} \frac{\omega_n}{\omega_n^{\frac{k}{n}}}} \right)^{\frac{1}{k}} = \frac{k!^{\frac{1}{k}}}{n!^{\frac{1}{n}}} \le e \frac{k}{n}.
$$
}
\end{remark}

\section{The logarithmic growth of the constant in Shepard's problem for functions with respect to the dimension}

This section is devoted to the proof of Theorem \ref{th0}.

We will need the following Minkowski integral inequality for negative powers, which is a simple consequence of the reverse Hölder's inequality (see \cite{HLP}).

\begin{theorem}
\label{th5}
Let $(X, \mu)$, $(Y, \nu)$ be measure spaces (with finite or $\sigma$-finite measures). 
Suppose that $p < 0$ and $f > 0$ is a measurable function on $X \times Y$, such that $\forall x \in X \colon f(x, \cdot) \in L^1(Y, \nu)$ and $\forall y \in Y \colon f(\cdot, y) \in L^p(X, \mu)$, then the following inequality holds
$$
\left\|\int_Y f(\cdot, y) \, \nu(dy) \right\|_{L^p(X, \mu)} \ge \int_Y \| f(\cdot, y) \|_{L^p(X, \mu)} \, \nu(dy).
$$
\end{theorem}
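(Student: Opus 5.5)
We prove Theorem \ref{th5} by reducing it to the reverse Hölder inequality via duality, exactly as the classical Minkowski integral inequality is derived from Hölder's inequality. Set $F(x) := \int_Y f(x,y)\,\nu(dy)$. Then $F>0$ and $F<\infty$ everywhere by the hypothesis $f(x,\cdot)\in L^1(Y,\nu)$. Write $q = \frac{p}{p-1}$ for the conjugate exponent; since $p<0$ we have $q\in(0,1)$. The tool is the reverse Hölder inequality from \cite{HLP}: for positive $g,h$,
$$
\int_X g h \, d\mu \ge \|g\|_{L^p(X,\mu)} \|h\|_{L^q(X,\mu)},\qquad p<0,\ \tfrac1p+\tfrac1q=1 .
$$

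The main computation runs as follows. If $\|F\|_p = 0$ or $\|F\|_p=\infty$, the claim is trivial or handled separately; here $\|F\|_p=0$ means $\int F^p=\infty$. So assume $0<\int_X F^p\,d\mu<\infty$. Write
$$
\int_X F^p \, d\mu = \int_X F\cdot F^{p-1}\, d\mu = \int_Y \int_X f(x,y) F(x)^{p-1}\,\mu(dx)\,\nu(dy),
$$
where Tonelli applies because everything is nonnegative. For each fixed $y$, apply reverse Hölder with $g=f(\cdot,y)$ and $h=F^{p-1}$. Since $(p-1)q=p$, we get $\|F^{p-1}\|_q = (\int F^p)^{1/q}$. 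Hence
$$
\int_X F^p\,d\mu \ge \Big(\int_X F^p\,d\mu\Big)^{1/q} \int_Y \|f(\cdot,y)\|_p\,\nu(dy).
$$
Dividing by $(\int F^p)^{1/q}$ leaves $(\int F^p)^{1-1/q} = (\int F^p)^{1/p} = \|F\|_p$ on the left, which is exactly the desired inequality.

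The main obstacle is the degenerate cases and justifying reverse Hölder where the norms may be $0$ or $\infty$. Two conventions matter here: $\|g\|_p=0$ exactly when $\int g^p=\infty$, and $\|f(\cdot,y)\|_p$ is finite and positive by the hypothesis $f(\cdot,y)\in L^p$.

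If $\int_X F^p=\infty$, then $\|F\|_p=0$, and we must show that the right-hand side vanishes. We do this by truncation. Replace $\mu$ by its restriction to sets $X_N$ of finite measure on which $F\le N$ (using $\sigma$-finiteness), so that $\int_{X_N}F^p<\infty$ because $F^p$ is bounded away from infinity there. Run the argument on $X_N$; it gives $\|F\|_{L^p(X_N)}\ge \int_Y\|f(\cdot,y)\|_{L^p(X_N)}\,d\nu$. Then let $N\to\infty$. By monotone convergence $\int_{X_N}F^p\uparrow\int_X F^p$, so $\|F\|_{L^p(X_N)}\downarrow \|F\|_{L^p(X)}$ since $p<0$. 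Likewise $\|f(\cdot,y)\|_{L^p(X_N)}$ decreases to $\|f(\cdot,y)\|_{L^p(X)}$, and these quantities are finite. We want to pass to the limit on the right-hand side, but dominated convergence in reverse direction is not automatic, so we use only one inequality. For each $N$,
$$
\|F\|_{L^p(X_N)} \ge \int_Y \|f(\cdot,y)\|_{L^p(X_N)}\,d\nu \ge \int_Y \|f(\cdot,y)\|_{L^p(X)}\,d\nu ,
$$
because the norm decreases in the domain. Letting $N\to\infty$ on the left side alone then gives the result. This truncation also covers the general case uniformly, so I would present the proof on $X_N$ first and conclude by this monotone limit.
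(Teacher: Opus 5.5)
Your duality argument is the same route the paper has in mind. The paper gives no proof beyond calling the statement ``a simple consequence of the reverse H\"older inequality'' with a reference to \cite{HLP}. Your main computation is correct in the case $0<\int_X F^p\,d\mu<\infty$: you write $\int_X F^p=\int_Y\int_X f\,F^{p-1}$ by Tonelli, apply reverse H\"older with $q=p/(p-1)\in(0,1)$ on the factor $F^{p-1}$, use $(p-1)q=p$, and divide.

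\textbf{The gap.} The truncation meant to cover $\int_X F^p\,d\mu=\infty$ goes the wrong way. On $X_N\subseteq\{F\le N\}$ you only get $F^p\ge N^p$, which is a lower bound, because $p<0$. It is small values of $F$ that make $F^p$ large. So your justification ``$F^p$ is bounded away from infinity there'' is false, and nothing rules out $\int_{X_N}F^p\,d\mu=\infty$. In that case the division step on $X_N$ fails in exactly the situation the truncation was introduced for. You carried the truncation over from the classical $p>1$ proof without flipping it.

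\textbf{The repair.} Take $X_N=E_N\cap\{F\ge 1/N\}$, where $E_N\uparrow X$ and $\mu(E_N)<\infty$. Then:
\begin{itemize}
\item $\int_{X_N}F^p\,d\mu\le N^{-p}\mu(E_N)<\infty$;
\item $X_N\uparrow X$, because $F>0$ everywhere;
\item for $N$ large enough that $\mu(X_N)>0$, your chain $\|F\|_{L^p(X_N)}\ge\int_Y\|f(\cdot,y)\|_{L^p(X_N)}\,d\nu\ge\int_Y\|f(\cdot,y)\|_{L^p(X)}\,d\nu$ and the monotone limit go through unchanged.
\end{itemize}

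\textbf{The framing.} You say that when $\int_X F^p=\infty$ you ``must show that the right-hand side vanishes.'' That cannot be the goal. Under the hypotheses each $\|f(\cdot,y)\|_p$ is strictly positive, so the right-hand side is positive whenever $\nu(Y)>0$. What the corrected truncation actually proves is that $\int_X F^p\,d\mu=\infty$ never occurs. You can also see this directly. Pick $Y'\subseteq Y$ with $0<\nu(Y')<\infty$ on which $\int_X f(\cdot,y)^p\,d\mu\le M$. Since $F\ge\int_{Y'}f(\cdot,y)\,d\nu$ and $t\mapsto t^p$ is decreasing and convex, Jensen gives $\int_X F^p\,d\mu\le \nu(Y')^{p}M<\infty$. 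With that observation the main computation alone suffices, and no truncation is needed.
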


The following statement was proved in the paper \cite{MY} (see Theorem 1.3 there).

\begin{theorem}
\label{th2}
Let $K$ be a convex body in $\mathbb{R}^n$, then the following inequality holds
$$
\left( \int_{Gr_{n,k}} \lambda_k(P_H K)^{-n} \nu_{n,k}(dH) \right)^{-\frac{1}{n}} \ge \frac{\omega_k}{\omega_n^{\frac{k}{n}}} \lambda_n(K)^{\frac{k}{n}}.
$$
\end{theorem}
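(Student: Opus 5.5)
The statement to be proved is Theorem \ref{th2}, Lutwak's conjectured inequality for affine quermassintegrals. I would prove it by Steiner symmetrization, in the spirit of the Petty projection inequality (the case $k=n-1$). Write
$$\Phi_k(K):=\int_{Gr_{n,k}}\lambda_k(P_HK)^{-n}\,\nu_{n,k}(dH).$$
For the Euclidean ball $rB_2^n$, every projection is a $k$-ball of radius $r$, so $\Phi_k(rB_2^n)^{-1/n}=\omega_k r^k$. If $\lambda_n(rB_2^n)=\lambda_n(K)$, this equals $\frac{\omega_k}{\omega_n^{k/n}}\lambda_n(K)^{k/n}$. Hence the theorem says exactly that $\Phi_k(K)\le\Phi_k(B_K)$, where $B_K$ is the ball with the volume of $K$.

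The plan has three steps.
\begin{enumerate}
\item Prove monotonicity: $\Phi_k(S_uK)\ge\Phi_k(K)$ for every direction $u$, where $S_u$ is Steiner symmetrization.
\item Choose a sequence of directions along which the iterated Steiner symmetrals converge to $B_K$ in the Hausdorff metric.
\item Pass to the limit. Since $K$ is a body, projection volumes are bounded below uniformly on a Hausdorff neighbourhood of $B_K$, so $\Phi_k$ is continuous there; volume is preserved along the way.
\end{enumerate}
Steps 2 and 3 are standard.

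For step 1 I would use a shadow system. Write $K=\{y+s u: y\in P_{u^\perp}K,\ s\in[a(y),b(y)]\}$ and set
$$K_t=\{y+(s-t(a(y)+b(y)))u\}, \qquad t\in[0,1].$$
Then $K_0=K$, $K_1$ is the reflection of $K$ in $u^\perp$ (so $\Phi_k(K_1)=\Phi_k(K)$), and $K_{1/2}=S_uK$. By Shephard's theorem, for each fixed $H$ the function $t\mapsto\lambda_k(P_HK_t)$ is convex. If I only had convexity of $t\mapsto\lambda_k(P_HK_t)^{-n}$ in the wrong direction, averaging would give nothing. So the key is to exploit the structure more carefully.

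Following Milman--Yehudayoff, I would pair each $H$ with $H'=R_u H$, its reflection in $u^\perp$. I would then express $P_HK_t$ through projections of the fixed body $K$ onto a moving subspace $H_t$, up to a linear change of variables $A_t$ that shears along $u$. Concretely, $\lambda_k(P_HK_t)=J_t(H)\,\lambda_k(P_{H_t}K)$, where $J_t$ is an explicit Jacobian. The measure $\nu_{n,k}$ transforms under $H\mapsto H_t$ with a computable density. After this substitution, the integrand $\lambda_k(P_HK_t)^{-n}$ together with the Jacobian of the Grassmannian change of variables should be reorganised as $\phi(t)^{-n}$, where $\phi$ is a $1/n$-homogeneous convex-type quantity. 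The exponent $-n$ is precisely the one that makes the weights cancel. I would then show that $t\mapsto\Phi_k(K_t)^{-1/n}$, or at least $\Phi_k(K_t)$ after symmetrizing the pair $(H,H')$, satisfies
$$\Phi_k(K_{1/2})\ge\tfrac12\bigl(\Phi_k(K_0)+\Phi_k(K_1)\bigr),$$
and this is exactly the monotonicity needed. To obtain the pointwise inequality for the paired integrands, I would apply H\"older or AM--GM to the convexity of $\lambda_k(P_{H}K_t)$ given by Shephard's theorem.

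The main obstacle is this change of variables in step 1. Checking that the Jacobian of the induced map on $Gr_{n,k}$ combines with the factor $J_t(H)^{-n}$ into something independent of $t$ (or of the right sign) is the heart of the matter and is where the exponent $-n$ is forced. I would first verify the computation for $k=1$, where $\Phi_1$ is the polar volume up to constants and the claim reduces to Blaschke--Santal\'o-type symmetrization. For $k=n-1$ it reduces to Petty's projection inequality. Once both cases work, I would generalise by decomposing $H=E\oplus\mathbb{R}v$ with $E=H\cap u^\perp$ and performing the one-dimensional computation in the $v$ variable.
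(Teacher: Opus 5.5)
The paper does not prove this statement; it quotes it from \cite[Theorem 1.3]{MY}. Your framing is sound. The normalisation at the ball is right, $K_t$ is a parallel chord movement with $K_{1/2}=S_uK$ and $K_1=R_uK$, and Steps 2--3 are standard.

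\textbf{The gap is Step 1.} The inequality $\Phi_k(S_uK)\ge\Phi_k(K)$ for your $\Phi_k$ is the entire content of Lutwak's conjecture, and the mechanism you sketch for it cannot work.

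First, there is no identity $\lambda_k(P_HK_t)=J_t(H)\,\lambda_k(P_{H_t}K)$ coming from a linear shear $A_t$. $K_t$ moves the chord over $y$ by $t(a(y)+b(y))$, which is not linear in $y$, so $K_t$ is not a linear image of $K$. When $y\mapsto a(y)+b(y)$ happens to be affine, $K_t$ is a volume-preserving affine image of $K$, and $\Phi_k(K_t)$ is constant by affine invariance. So this device only sees the trivial case.

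Second, Shephard's theorem bounds $g_H(1/2)$, where $g_H(t)=\lambda_k(P_HK_t)$, only from above; note also $g_{R_uH}(t)=g_H(1-t)$. This gives
$g_H(1/2)^{-n}\ge\bigl(\tfrac{g_H(0)+g_H(1)}2\bigr)^{-n}$, and that quantity lies \emph{below} $\tfrac12\bigl(g_H(0)^{-n}+g_H(1)^{-n}\bigr)$. No H\"older or AM--GM step can reverse this, because the paired pointwise inequality is false. Take $n=2$, $k=1$, $K$ a thin $\varepsilon\times L$ rectangle along a direction $v$ oblique to $u$, and $H=v^\perp$. Then $\lambda_1(P_HK)=\varepsilon$. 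But $S_uK$ is a thin rectangle along $P_{u^\perp}v$, whose width in direction $v^\perp$ is of order $L$. So monotonicity must come from an argument that mixes different subspaces.

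\textbf{What is missing.} Your closing remark, slicing by $F=H\cap u^\perp$, is the right start. But what remains is not one-dimensional: $v$ ranges over the unit sphere $S(F^\perp)$ of the $(n-k+1)$-dimensional space $F^\perp$. The missing idea is to turn the inner integral into a weighted polar volume, using three facts.
\begin{enumerate}
\item $\nu_{n,k}$ disintegrates into Haar measure in $F\in Gr(u^\perp,k-1)$ and a density proportional to $|\langle v,u\rangle|^{k-1}$ on $S(F^\perp)$, with $H=F\oplus\mathbb{R}v$.
\item Put $K_y=\{z\in F^\perp: y+z\in K\}$ and let $w_{K_y}(v)$ be its width in direction $v$. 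Then $\lambda_k(P_{F\oplus\mathbb{R}v}K)=\int_F w_{K_y}(v)\,dy=h_{M_F}(v)$, where $M_F=\int_F(K_y-K_y)\,dy$ is a symmetric body in $F^\perp$.
\item Since $\dim F^\perp+(k-1)=n$, polar coordinates turn $\int_{S(F^\perp)}|\langle v,u\rangle|^{k-1}h_{M_F}(v)^{-n}\,dv$ into a constant multiple of $\int_{M_F^\circ}|\langle x,u\rangle|^{k-1}dx$. This is exactly where the exponent $-n$ is used.
\end{enumerate}

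Monotonicity then follows.
\begin{itemize}
\item Since $u\in F^\perp$, you have $(S_uK)_y=S_u(K_y)$, and hence $M_F(S_uK)\subseteq S_uM_F(K)$.
\item Slice polars by the hyperplanes orthogonal to $u$, as in Meyer--Pajor's proof of Blaschke--Santal\'o. With $M^\circ_s=\{y\in F^\perp\cap u^\perp: y+su\in M^\circ\}$, one gets $\tfrac12\bigl(M^\circ_s+M^\circ_{-s}\bigr)\subseteq(S_uM)^\circ_s$.
\item Brunn--Minkowski and the symmetry of $M_F$ then give $|(S_uM_F)^\circ_s|\ge|(M_F)^\circ_s|$ for every $s$.
\item The weight depends only on $s=\langle x,u\rangle$, so integrating in $s$ and then over $F$ gives $\Phi_k(S_uK)\ge\Phi_k(K)$.
\end{itemize}
Without an argument of this kind, Step 1 remains unproved.
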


We now generalize it to the case of log-concave functions. Before that, we formulate the following auxiliary statement. 
By the reasoning similar to the proof of Theorem $4.1$ in \cite{Zh} (see formula ($4.6$) there), we can obtain the following lemma.

\begin{lemma}
\label{lemma6}
Let $f \in Q^n$ and $p \geq 1$, then the inequality
$$
\|f\|_p \le \int_{0}^{+\infty} \lambda_n(A_t(f))^{\frac{1}{p}} \, dt
$$
holds.
\end{lemma}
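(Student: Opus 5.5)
The plan is to use the layer-cake representation together with Minkowski's integral inequality for the $L^p$ norm, with $p\ge 1$.

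First write $f$ as a superposition of indicators of its level sets:
$$
f(x)=\int_0^{+\infty} I_{A_t(f)}(x)\,dt = \int_0^{\|f\|_\infty} I_{A_t(f)}(x)\,dt .
$$
For fixed $x$ the integrand equals $1$ exactly when $t\le f(x)$. This holds pointwise for every $x$ and does not even need log-concavity. The function $(x,t)\mapsto I_{A_t(f)}(x)=I_{\{f(x)\ge t\}}$ is jointly measurable on $\mathbb{R}^n\times\mathbb{R}_+$, since $f$ is upper semicontinuous and hence Borel.

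Next apply the Minkowski integral inequality in $L^p(\mathbb{R}^n)$ for $p\ge 1$, which is the positive-exponent counterpart of Theorem \ref{th5}, with reversed sign. It gives
$$
\|f\|_p=\Bigl\|\int_0^{+\infty} I_{A_t(f)}(\cdot)\,dt\Bigr\|_{L^p(\mathbb{R}^n)} \le \int_0^{+\infty}\|I_{A_t(f)}\|_{L^p(\mathbb{R}^n)}\,dt .
$$
It then remains to observe that $\|I_{A_t(f)}\|_p=\lambda_n(A_t(f))^{1/p}$, which is exactly the claimed bound. Both sides may be $+\infty$ a priori, and the inequality remains valid in that case. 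For $f\in Q^n$ the right-hand side is in fact finite: log-concave integrable functions decay exponentially, so $\lambda_n(A_t(f))$ grows only polylogarithmically in $1/t$ as $t\to 0$, and the $t$-range is bounded by $\|f\|_\infty$. This finiteness is not needed for the inequality itself.

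The only point requiring care is the justification of Minkowski's integral inequality in this form, namely the measurability and $\sigma$-finiteness hypotheses. If one prefers to avoid quoting it, there is a duality argument. Take $h\ge 0$ with $\|h\|_{p'}\le 1$. By Tonelli and Hölder,
$$
\int f h\,dx=\int_0^{\infty}\int_{A_t(f)} h\,dx\,dt\le \int_0^\infty \lambda_n(A_t(f))^{1/p}\,dt .
$$
Taking the supremum over such $h$ yields $\|f\|_p$ on the left. I expect no real obstacle beyond this routine verification.
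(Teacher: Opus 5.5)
Your proof is correct and complete. The layer-cake identity $f(x)=\int_0^{\infty} I_{A_t(f)}(x)\,dt$ combined with Minkowski's integral inequality in $L^p(\mathbb{R}^n)$, $p\ge 1$, gives the bound. Your Tonelli--H\"older duality computation is the same argument unpacked. It works for every non-negative measurable $f$, without using log-concavity.

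The paper does not write out a proof of this lemma. It only refers to the reasoning behind formula (4.6) in Zhang's proof of the affine Sobolev inequality. That is the classical step in the Federer--Fleming--Maz'ya derivation of the Sobolev inequality from an isoperimetric inequality, there with $p=\frac{n}{n-1}$. Your argument therefore supplies a self-contained proof of exactly that step for general $p\ge 1$.

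For comparison, this step is also commonly proved by a one-variable argument. Set $\varphi(t)=\lambda_n(A_t(f))$. Since $\varphi$ is non-increasing, $\int_0^s\varphi^{1/p}\,dt\ge s\,\varphi(s)^{1/p}$. Using $p-1\ge 0$, this shows that $s\mapsto\bigl(\int_0^s\varphi(t)^{1/p}\,dt\bigr)^p$ has derivative at least $p\,s^{p-1}\varphi(s)$. One then integrates, lets $s\to\infty$, and uses $\|f\|_p^p=p\int_0^\infty t^{p-1}\varphi(t)\,dt$.

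Your route is shorter. The one-variable route replaces Minkowski's integral inequality with the automatic monotonicity of the level-set volumes.
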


\begin{theorem}
\label{th3}
For every $f \in Q^n$ the inequality 
$$
\left( \int_{Gr_{n,k}} \|P_H f\|_1^{-n} \, \nu_{n,k}(dH) \right)^{-\frac{1}{n}} \ge \frac{\omega_k}{\omega_n^{\frac{k}{n}}} \|f\|_{\frac{n}{k}}.
$$    
holds.
\end{theorem}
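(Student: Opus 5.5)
The plan is to reduce to the geometric statement, Theorem \ref{th2}, via the layer-cake representation, and then to combine the level sets using Theorem \ref{th5} and Lemma \ref{lemma6}.

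\textbf{Step 1 (projections of $f$ through level sets).} For $H\in Gr_{n,k}$ we have $A_t(P_H f)=P_H(A_t(f))$. Indeed, $P_H f(y)\ge t$ iff $\sup_{z\in H^\perp} f(y+z)\ge t$. Upper semicontinuity and log-concavity with finite positive integral force every level set $A_t(f)$ with $t>0$ to be compact, so the supremum is attained. Consequently
$$
\|P_H f\|_1=\int_0^{\|f\|_\infty}\lambda_k\bigl(P_H(A_t(f))\bigr)\,dt .
$$

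\textbf{Step 2 (reverse Minkowski).} Apply Theorem \ref{th5} with $p=-n<0$, $X=Gr_{n,k}$ with $\nu_{n,k}$, $Y=(0,\|f\|_\infty)$ with Lebesgue measure, and integrand $F(H,t)=\lambda_k(P_H(A_t(f)))$. This gives
$$
\left(\int_{Gr_{n,k}}\|P_H f\|_1^{-n}\,d\nu_{n,k}\right)^{-\frac1n}\ge\int_0^{\|f\|_\infty}\left(\int_{Gr_{n,k}}\lambda_k(P_H A_t(f))^{-n}\,d\nu_{n,k}\right)^{-\frac1n}dt .
$$
The left side is exactly $\|\,\|P_H f\|_1\,\|_{L^{-n}}$, and the inner quantity on the right is the $L^{-n}$ norm of $F(\cdot,t)$.

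Theorem \ref{th5} requires $F>0$. If $A_t(f)$ has empty interior for some $t$, then it has measure zero. Since $\|f\|_1>0$, the sets $A_t(f)$ have nonempty interior for a.e. $t<\|f\|_\infty$, except possibly at the single value $t=\|f\|_\infty$. So positivity can fail only on a null set of $t$. If needed, I would handle this by restricting $Y$ to $(0,\|f\|_\infty)$ and, for the degenerate top level, by a limiting argument. Integrability should be routine: each $\lambda_k(P_H A_t)$ is bounded above and below by positive constants for fixed $t$.

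\textbf{Step 3 (geometric input).} Apply Theorem \ref{th2} to each body $A_t(f)$. The right-hand side of Step 2 is then at least
$$
\frac{\omega_k}{\omega_n^{k/n}}\int_0^{\infty}\lambda_n(A_t(f))^{k/n}\,dt .
$$

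\textbf{Step 4 (conclusion).} Lemma \ref{lemma6} with $p=n/k\ge1$ gives $\int_0^\infty\lambda_n(A_t(f))^{k/n}\,dt\ge\|f\|_{n/k}$, which finishes the proof.

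The main obstacle I expect is the measure-theoretic justification in Step 2: strict positivity of the integrand and the hypotheses of Theorem \ref{th5}. The identity $P_H(A_t f)=A_t(P_H f)$ in Step 1, which needs compactness of the level sets so that the supremum is attained, also requires care. The remaining steps are direct citations.
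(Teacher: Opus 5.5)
Your proposal is correct and follows the paper's proof step for step: the layer-cake formula for $\|P_H f\|_1$, the reverse Minkowski inequality (Theorem~\ref{th5}) with $p=-n$, the identity $A_t(P_H f)=P_H A_t(f)$ together with Theorem~\ref{th2} applied to each level set, and Lemma~\ref{lemma6} with $p=n/k$. On positivity, which the paper does not address, $\|f\|_1>0$ alone does not give your claim; log-concavity does, and it shows $A_t(f)$ has nonempty interior for every $t\in(0,\|f\|_\infty)$. So restricting $Y$ to this open interval is enough and no limiting argument is needed.
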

\begin{proof}
By the Fubini's theorem we have
$$
\|P_H f\|_1 = \int_H P_H f(y) \, dy = \int_0^{+\infty} \lambda_k(A_t(P_H f)) \, dt.
$$

Putting the obtained expression into the formula above and using the Minkowski's inequality for negative powers from Theorem \ref{th5}, we get
\begin{multline*}
\left(
\int_{Gr_{n,k}} \|P_H f\|_1^{-n} \, \nu_{n,k}(dH)
\right)^{-\frac{1}{n}} =
\left(
\int_{Gr_{n,k}}
\left(
\int_0^{+\infty} \lambda_k(A_t(P_H f)) \, dt
\right)^{-n} \, \nu_{n,k}(dH)
\right)^{-\frac{1}{n}} \ge \\
\ge
\int_0^{+\infty}
\left(
\int_{Gr_{n,k}} \lambda_k (A_t(P_H f))^{-n} \, \nu_{n,k}(dH)
\right)^{-\frac{1}{n}} \, dt.
\end{multline*}

We note that $A_t(P_H f) = P_H A_t(f)$. By Theorem \ref{th2} for the set $A_t(f)$, we obtain
$$
\left(
\int_{Gr_{n,k}} \|P_H f\|_1^{-n} \, \nu_{n,k}(dH)
\right)^{-\frac{1}{n}} \ge
\int_0^{+\infty} \frac{\omega_k}{\omega_n^{\frac{k}{n}}} \lambda_n(A_t(f))^{\frac{k}{n}} \, dt.
$$

Applying Lemma \ref{lemma6} with $p = \frac{n}{k}$, we obtain the desired inequality.
\end{proof}

For $f \in Q^n$ we consider the following quantity (the affine functional Quermassintegral of the function $f$):
$${\Phi}_{n-k} (f) = \frac{\omega_n}{\omega_k} \Bigl(\int_{Gr_{n,k}} 
\|P_H f\|_{L^1(H)}^n \, \nu_{n, k}(dH) \Bigr)^{\frac{1}{n}}.$$

\begin{lemma}[Affine invariance of ${\Phi}_{n-k}$]
\label{lemma4}
Let $f \in Q^n$ and $T \in SL(n)$.
Then for $k \in \{1, ..., n\}$, the equality ${\Phi}_{n-k} (f) = {\Phi}_{n-k} (Tf)$ holds.
\end{lemma}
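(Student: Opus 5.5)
The plan is to reduce the statement to the affine invariance of the body version of this functional via the level sets, and then integrate over $t$.

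\textbf{Step 1: the projection of $Tf$ in terms of level sets.} By definition $Tf(x)=f(T^{-1}x)$, so $A_t(Tf)=T A_t(f)$ for every $t>0$. Since projection commutes with taking level sets, $A_t(P_H(Tf))=P_H A_t(Tf)$, and Fubini gives
$$
\|P_H(Tf)\|_1=\int_0^{+\infty}\lambda_k\bigl(P_H(TA_t(f))\bigr)\,dt .
$$
So everything reduces to understanding how $\lambda_k(P_H(TK))$ transforms under $T\in SL(n)$, uniformly in the convex body $K=A_t(f)$.

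\textbf{Step 2: transformation of projections.} I would use the standard change of variables on the Grassmannian. For $T\in SL(n)$, consider the map $H\mapsto T^{-t}H$; more precisely, the relevant map is $H\mapsto (T^{t}H)$ or its inverse. There is a Jacobian factor $J_T(H)$, depending only on $T$ and $H$ and \emph{not} on $K$, such that:
\begin{itemize}
\item $\lambda_k(P_H(TK))=J_T(H)\,\lambda_k(P_{H'}K)$ with $H'=T^{t}H$. Here $J_T(H)$ is the $k$-volume distortion of $T^{t}$ restricted to $H$, which one checks via $P_H T = P_H T P_{H'}$ on $H'$ together with duality of the restrictions.
\item The push-forward of $\nu_{n,k}$ under $H\mapsto H'$ has density $J_T(H)^{-n}$ with respect to $\nu_{n,k}$.
\end{itemize}
This is exactly the mechanism behind the classical $SL(n)$-invariance of $\int_{Gr_{n,k}}\lambda_k(P_HK)^{n}\,d\nu$, the projection-body/Grinberg-type invariance. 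Because $J_T(H)$ is independent of $t$, it factors out of the $t$-integral:
$$
\|P_H(Tf)\|_1=J_T(H)\,\|P_{T^tH}f\|_1 .
$$

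\textbf{Step 3: conclusion.} Raising to the power $n$ and integrating over $H$, the factor $J_T(H)^n$ cancels against the Jacobian of the Grassmannian substitution. This yields $\int\|P_H(Tf)\|_1^n\,d\nu=\int\|P_{H}f\|_1^n\,d\nu$, which is the claimed equality $\Phi_{n-k}(Tf)=\Phi_{n-k}(f)$.

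An alternative route is to cite the known invariance for bodies (Grinberg) applied to each $A_t(f)$. However, invariance of the integrals alone does not pass through the $t$-integral, since the map $\|\cdot\|_{L^n(\nu)}$ does not commute with $\int dt$. So the pointwise-in-$H$ factorization of Step 2 is essential.

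\textbf{Main obstacle.} The hard part is verifying the Jacobian identity: that the density of the transformed Haar measure on $Gr_{n,k}$ is exactly $J_T(H)^{-n}$ with the same $J_T$ appearing in the projection formula. I would first try to derive it via polar coordinates on $k$-frames. Concretely, write $\nu_{n,k}$ as the image of the Gaussian measure on $(\mathbb{R}^n)^k$ under span, then compute how the linear map $T^t$ acting on $k$-tuples of vectors changes Gaussian volume. If that becomes messy, a fallback is to reduce to the $k=n-1$ case via the projection-body formula $\lambda_{n-1}(P_{u^\perp}K)=h_{\Pi K}(u)$ and the identity $\Pi(TK)=T^{-t}\Pi K$. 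That argument covers the hyperplane case; for general $k$, I would cite the Grinberg computation.
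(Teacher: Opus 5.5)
You have the right mechanism, and it is what the paper's one-line ``similar to Grinberg'' proof amounts to. The first bullet of Step 2 is correct: $\ker(P_HT)=T^{-1}H^{\perp}=(T^tH)^{\perp}$. Hence $\lambda_k(P_H(TK))=J_T(H)\,\lambda_k(P_{T^tH}K)$, where $J_T(H)$ is the $k$-dimensional Jacobian of $T^t|_H$ and does not depend on $K$. This gives $\|P_H(Tf)\|_1=J_T(H)\,\|P_{T^tH}f\|_1$. You are also right that Grinberg's result cannot simply be applied level set by level set.

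The genuine error is in the second bullet: the exponent has the wrong sign. For $S\in GL(n)$, with $|S|_H|$ the $k$-dimensional Jacobian of $S|_H$, the correct change of variables is
\[
\int_{Gr_{n,k}}F(SH)\,|S|_H|^{-n}\,\nu_{n,k}(dH)=|\det S|^{-k}\int_{Gr_{n,k}}F(H)\,\nu_{n,k}(dH).
\]
For $k=1$ this is the classical fact that $u\mapsto Su/|Su|$ has Jacobian $|\det S|\,|Su|^{-n}$ on $S^{n-1}$. So the push-forward of $\nu_{n,k}$ under $H\mapsto T^tH$ has density $J_T(H)^{+n}$ at $T^tH$, not $J_T(H)^{-n}$. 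The factor $J_T(H)^p$ coming from the projections therefore cancels only for $p=-n$. The cancellation you perform in Step 3 does not occur, and the identity it asserts is false.

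A concrete counterexample: take $n=2$, $k=1$, $f=I_{B_2^2}$ and $T=\mathrm{diag}(a,a^{-1})$.
\begin{itemize}
\item For $f$, $\|P_\ell f\|_1=2$ for every line $\ell$.
\item For $\ell$ spanned by $u=(\cos\theta,\sin\theta)$, $\|P_\ell(Tf)\|_1=2|T^tu|=2(a^2\cos^2\theta+a^{-2}\sin^2\theta)^{1/2}$.
\item Hence $\int\|P_\ell(Tf)\|_1^{2}\,d\nu=2(a^2+a^{-2})\neq 4$ for $a\neq 1$.
\item By contrast, $\int\|P_\ell(Tf)\|_1^{-2}\,d\nu=\frac14=\int\|P_\ell f\|_1^{-2}\,d\nu$.
\end{itemize}

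There is no classical invariance of $\int\lambda_k(P_HK)^{n}\,d\nu$. The $SL(n)$-invariant quantities, Lutwak's affine quermassintegrals (shown invariant by Grinberg), are built from $\lambda_k(P_HK)^{-n}$. Your own fallback for $k=n-1$ shows this: $\int h_{\Pi K}^{-n}\,d\sigma$ is proportional to $\lambda_n(\Pi^\circ K)$, and $\Pi^\circ(TK)=T\,\Pi^\circ K$, whereas nothing controls $\int h_{\Pi K}^{n}\,d\sigma$.

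The displayed definition of $\Phi_{n-k}$ in the paper has evidently lost its minus signs. The proof of Theorem~\ref{th0} uses the lemma exactly for $\bigl(\int\|P_Hf\|_1^{-n}\,d\nu_{n,k}\bigr)^{-1/n}$. With the corrected Jacobian, your Steps 1--2 prove that intended version; the case $k=n$ is trivial since $J_T=|\det T|=1$. What was missing is the actual verification of the Jacobian identity you yourself flagged as the main obstacle. Doing it would have shown that only the $(-n)$-th moment is invariant.
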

\begin{proof}
If $k = n$, then ${\Phi}_0 (f) = \|f\|_1$.

If $k < n$, the proof is similar to the proof of the corresponding theorem for convex bodies (see \cite[Theorem 2]{Grinb}).
\end{proof}

\textbf{\textit{Proof of the Theorem \ref{th0}.}}
\rm{By Theorem \ref{th3} for the function $f_1$, we have
$$
\|f_1\|_{\frac{n}{k}} \le \frac{\omega_n^{\frac{k}{n}}}{\omega_k}
\left( \int_{Gr_{n,k}} \|P_H f_1\|_1^{-n} \, \nu_{n,k}(dH) \right)^{-\frac{1}{n}}.
$$

From the Shepard's condition, we have the inequality
$$
\left(
\int_{Gr_{n,k}} \|P_H f_1\|_1^{-n} \, \nu_{n,k}(dH)
\right)^{-\frac{1}{n}} \le
\left(
\int_{Gr_{n,k}} \|P_H f_2\|_1^{-n} \, \nu_{n,k}(dH)
\right)^{-\frac{1}{n}}.
$$

Lemma \ref{lemma4} implies that for every $T \in SL(n)$ one has
$$
\left(
\int_{Gr_{n,k}} \|P_H f_2\|_1^{-n} \, \nu_{n,k}(dH)
\right)^{-\frac{1}{n}} =
\left(
\int_{Gr_{n,k}} \|P_H T f_2\|_1^{-n} \, \nu_{n,k}(dH)
\right)^{-\frac{1}{n}}.
$$

From Jensen's inequality
$$
\left(
\int_{Gr_{n,k}} \|P_H T f_2\|_1^{-n} \, \nu_{n,k}(dH)
\right)^{-\frac{1}{n}} \le
\int_{Gr_{n,k}} \|P_H T f_2\|_1 \, \nu_{n,k}(dH) = \frac{\omega_k}{\omega_n} W_{n-k}(T f_2).
$$

Using Theorem \ref{prop2} with the function $f = T f_2$, we obtain the inequality
$$
W_{n-k}(T f_2) \le e^{\frac{k^2}{n}}
\|f_2\|_1^{\frac{k}{n}}
\|f_2\|_{\infty}^{\frac{n-k}{n}} W_{n-k} \left(
\frac{A_{[k]}(T f_2)}{\lambda_n(A_{[k]}(T f_2))^{\frac{1}{n}}}\right).
$$

To simplify the expression in brackets we note that
$$
\lambda_n(A_{[k]}(T f_2)) = \lambda_n(T A_{[k]}(f_2)) = \lambda_n(A_{[k]}(f_2)),
$$
thus, the above inequality can be rewritten as
$$
W_{n-k}(T f_2) \le e^{\frac{k^2}{n}}
\|f_2\|_1^{\frac{k}{n}}
\|f_2\|_{\infty}^{\frac{n-k}{n}} W_{n-k} \left(T
\frac{A_{[k]}(f_2)}{\lambda_n(A_{[k]}(f_2))^{\frac{1}{n}}}\right).
$$

Applying the classical Alexandrov inequality \eqref{alex} to the body $T
\frac{A_{[k]}(f_2)}{\lambda_n(A_{[k]}(f_2))^{\frac{1}{n}}}$, we obtain the bound
$$
W_{n-k} \left(T
\frac{A_{[k]}(f_2)}{\lambda_n(A_{[k]}(f_2))^{\frac{1}{n}}}\right) \le \frac{1}{\omega_n^{k-1}} W_{n-1}^k \left(T
\frac{A_{[k]}(f_2)}{\lambda_n(A_{[k]}(f_2))^{\frac{1}{n}}}\right).
$$

Recall that $T$ is an arbitrary operator from $SL(n)$. Varying $T$, we can obtain any affine image of
volume $1$ of the body $\frac{A_{[k]}(f_2)}{\lambda_n(A_{[k]}(f_2))^{\frac{1}{n}}}$. Therefore, from Theorem \ref{prop1}, we get
$$
\min_{T \in SL(n)} W_{n-1} \left(T
\frac{A_{[k]}(f_2)}{\lambda_n(A_{[k]}(f_2))^{\frac{1}{n}}}\right) \le c \omega_n \sqrt{n} \ln(n).
$$

Combining all the previous inequalities and taking into account the inequality $\omega_n^{\frac{1}{n}} \le \frac{C}{\sqrt{n}}$, where $C > 0$ is an absolute constant, we obtain
$$
\|f_1\|_{\frac{n}{k}} \le \frac{\omega_n^{\frac{k}{n}}}{\omega_k} \frac{\omega_k}{\omega_n} e^{\frac{k^2}{n}}
\|f_2\|_1^{\frac{k}{n}} \|f_2\|_{\infty}^{\frac{n-k}{n}} \frac{1}{\omega_n^{k-1}} (c \omega_n \sqrt{n} \ln(n))^k \le
(c_1 \ln(n))^k \|f_2\|_1^{\frac{k}{n}} \|f_2\|_{\infty}^{\frac{n-k}{n}}.
$$
The theorem is proved.}
\qed

\section{Urysohn's reverse inequality for functions}

This section is devoted to the proof of the following theorem generalizing Theorem \ref{th6}.

\begin{theorem}
\label{th1}
Let $k \in \{1, \ldots, n\}$ and $f \in Q^n$, then
$$
\min_{T \in GL(n),\, \|T f\|_1 = 1}W_{n-k} (T f) \le e^{\frac{k^2}{n}} \|f\|_{\infty}^{\frac{n-k}{n}} \min_{T \in GL(n),\, \lambda_n(T A_{[k]}(f)) = 1}W_{n-k} (T A_{[k]}(f)),
$$
\end{theorem}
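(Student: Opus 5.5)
Take $T_0\in GL(n)$ realizing (or nearly realizing) the minimum on the right-hand side, so that $\lambda_n(T_0A_{[k]}(f))=1$. Rescale it to $T = s T_0$ with $s>0$ chosen so that $\|Tf\|_1=1$. The admissible class on the left is all of $GL(n)$, so this $T$ is allowed, and it suffices to prove
$$
W_{n-k}(Tf)\le e^{\frac{k^2}{n}}\|f\|_\infty^{\frac{n-k}{n}}W_{n-k}(T_0A_{[k]}(f)).
$$

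The main tool is the upper bound of Theorem \ref{prop2}, applied to the function $Tf\in Q^n$:
$$
W_{n-k}(Tf)\le e^{\frac{k^2}{n}}\|Tf\|_1^{\frac{k}{n}}\|Tf\|_\infty^{\frac{n-k}{n}}\,W_{n-k}\Bigl(\frac{A_{[k]}(Tf)}{\lambda_n(A_{[k]}(Tf))^{\frac1n}}\Bigr).
$$
Here $\|Tf\|_1=1$ by the choice of $s$, and $\|Tf\|_\infty=\|f\|_\infty$ because $T$ only reparametrizes the domain.

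It remains to identify the body in the last factor. Since $Tf(x)=f(T^{-1}x)$ and $\|Tf\|_\infty=\|f\|_\infty$, the level sets satisfy $A_{[k]}(Tf)=T A_{[k]}(f)=sT_0A_{[k]}(f)$. Normalizing by volume removes the scalar $s$:
$$
\frac{sT_0A_{[k]}(f)}{\bigl(s^n\lambda_n(T_0A_{[k]}(f))\bigr)^{1/n}}=T_0A_{[k]}(f).
$$
This uses $\lambda_n(T_0A_{[k]}(f))=1$. Hence the last factor is exactly $W_{n-k}(T_0A_{[k]}(f))$, and the displayed inequality follows.

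Some routine points need checking. First, a suitable $s$ exists: $\|sT_0f\|_1=s^n|\det T_0|\,\|f\|_1$ is a continuous bijection of $s\in(0,\infty)$ onto $(0,\infty)$. Second, the minimum on the right is attained, because bodies of volume $1$ with bounded $W_{n-k}$ form a compact family modulo translations, which do not change $W_{n-k}$. If one prefers not to argue attainment, take an infimizing sequence $T_0$ and read the statement with $\inf$.

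The only real obstacle is bookkeeping: checking that the affine image of the level set matches the level set of the affine image, and that the volume normalization cancels the scaling. Both follow directly from the definitions.
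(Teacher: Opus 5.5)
Your proof is correct and is essentially the paper's argument: pick $T_0$ attaining the right-hand minimum, rescale to $\widehat T_0=\alpha T_0$ with $\|\widehat T_0 f\|_1=1$, apply the upper bound of Theorem \ref{prop2} to $\widehat T_0 f$, and use $A_{[k]}(Tf)=TA_{[k]}(f)$ so that the volume normalization cancels $\alpha$. Your side remark on attainment is overstated for $k=n$: there $W_0$ is the volume, so families with bounded $W_0$ are not compact, but the minimum is trivially attained since $W_0(TA_{[k]}(f))=1$ for every admissible $T$, and your $\inf$ fallback covers this in any case.
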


\begin{proof}
Let $T_0 \in GL(n)$ be such that $\lambda_n(T_0 A_{[k]}(f)) = 1$ and
$$
W_{n-k} (T_0 A_{[k]}(f)) = \min_{\widetilde T \in GL(n),\, \lambda_n(\widetilde T A_{[k]}(f)) = 1}W_{n-k} (\widetilde T A_{[k]}(f)).
$$
Let $\widehat{T}_0 = \alpha T_0$ be such that $\|\widehat T_0 f\|_1 = 1$.
By Theorem \ref{prop2} (with the function $\widehat T_0 f$) we get the inequality
\begin{equation*}
W_{n-k}(\widehat T_0 f) \le e^{\frac{k^2}{n}} \|\widehat T_0 f\|_1^{\frac{k}{n}} \|f\|_{\infty}^{\frac{n-k}{n}} W_{n-k} \left(
\frac{\widehat T_0 A_{[k]}(f)}{\lambda_n(\widehat T_0 A_{[k]}(f))^{\frac{1}{n}}}\right),
\end{equation*}
where we have applied the equality $TA_t(f) = A_t(Tf)$ with $T \in GL(n)$.

Since $\|\widehat T_0 f\|_1 = 1$ and $\lambda_n(\widehat T_0 A_{[k]}(f))^{\frac{1}{n}} = \alpha \lambda_n(T_0 A_{[k]}(f))^{\frac{1}{n}} = \alpha$, then
$$
W_{n-k}(\widehat T_0 f) \le e^{\frac{k^2}{n}} \|f\|_{\infty}^{\frac{n-k}{n}} W_{n-k} (T_0 A_{[k]}(f)).
$$
Taking into account that
$$
\min_{\widetilde T \in GL(n),\, \|\widetilde T f\|_1 = 1}W_{n-k} (\widetilde T f) \le W_{n-k}(\widehat T_0 f),
$$
we obtain the announced statement.
\end{proof}

\begin{corollary}
Let $k \in \{1, \ldots, n\}$ and $f \in Q^n$, then
\begin{multline*}
\max \Big\{\min_{T \in GL(n),\, \|Tf\|_1 = 1}W_{n-k} (Tf) \colon f \in Q^n, \|f\|_{\infty} \le M\Big\}
\le \\
\le e^{\frac{k^2}{n}} M^{\frac{n-k}{n}} 
\max \Big\{\min_{T \in GL(n),\, \lambda_n(TA) = 1} W_{n-k}(TA) \colon A\text{ --- convex body} \Big\}.
\end{multline*}
\end{corollary}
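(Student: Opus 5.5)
The corollary follows from Theorem \ref{th1} by taking suprema on both sides. The plan is as follows.

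Fix an arbitrary $f \in Q^n$ with $\|f\|_\infty \le M$ and apply Theorem \ref{th1} to it:
$$
\min_{T \in GL(n),\, \|T f\|_1 = 1}W_{n-k} (T f) \le e^{\frac{k^2}{n}} \|f\|_{\infty}^{\frac{n-k}{n}} \min_{T \in GL(n),\, \lambda_n(T A_{[k]}(f)) = 1}W_{n-k} (T A_{[k]}(f)).
$$
Since $\frac{n-k}{n}\ge 0$, we have $\|f\|_\infty^{\frac{n-k}{n}} \le M^{\frac{n-k}{n}}$, so the factor in front can be replaced by $e^{\frac{k^2}{n}} M^{\frac{n-k}{n}}$.

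Next we check that $A := A_{[k]}(f)$ is a convex body, so that the inner minimum is one of the quantities over which the right-hand maximum is taken.

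\begin{itemize}
\item Convexity follows from log-concavity of $f$.
\item Closedness follows from upper semicontinuity.
\item Boundedness holds because $f\in Q^n$ is integrable and log-concave, so its positive-level superlevel sets are bounded; the level here is $\|f\|_\infty e^{-k}>0$.
\item Nonempty interior holds because $\lambda_n(A)\ge (k/n)^n\|f\|_1/\|f\|_\infty>0$ by Lemma \ref{lemma1}.
\end{itemize}

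Hence the inner minimum is at most the maximum over all convex bodies $A$ of $\min_{T,\,\lambda_n(TA)=1} W_{n-k}(TA)$.

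The right-hand side obtained this way does not depend on $f$. Taking the supremum over all admissible $f$ on the left therefore gives the stated inequality, with max read as sup if it is not attained.

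There is no real obstacle here. The only points needing care are that $A_{[k]}(f)$ is a genuine convex body, and that the minima over $GL(n)$ exist. Both were already implicitly assumed in Theorem \ref{th1}. If one wishes, the minima can be read as infima throughout without affecting the argument.
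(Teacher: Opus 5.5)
Your proposal is correct and follows the same route as the paper. The paper's proof applies Theorem~\ref{th1} and then takes the maximum first over convex bodies and then over $f$. You do exactly that, and additionally make explicit two points the paper leaves implicit: the bound $\|f\|_\infty^{\frac{n-k}{n}} \le M^{\frac{n-k}{n}}$, and the check that $A_{[k]}(f)$ is a genuine convex body.
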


\begin{proof}
In the inequality from Theorem \ref{th1}, we first take the maximum over convex bodies $A$, and then over functions $f \in Q^n$.
\end{proof}

Applying the classical Alexandrov's inequality \eqref{alex} to the bodies in Theorem \ref{th1} and using Theorem \ref{prop1}, we obtain the following generalization of Urysohn's reverse inequality.

\begin{corollary}
Let $k \in \{1, \ldots, n\}$ and $f \in Q^n$, then
$$
\min_{T \in GL(n),\, \|T f\|_1 = 1} W_{n-k} (T f) \le e^{\frac{k^2}{n}} \|f\|_{\infty}^{\frac{n-k}{n}} \omega_n (C \sqrt{n} \ln(n))^k,
$$
where $C$ is an absolute constant.
\end{corollary}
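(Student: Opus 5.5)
The plan is to combine Theorem \ref{th1} with Alexandrov's inequality \eqref{alex} and then with the reverse Urysohn inequality, Theorem \ref{prop1}.

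First, Theorem \ref{th1} gives
$$
\min_{T \in GL(n),\, \|T f\|_1 = 1}W_{n-k} (T f) \le e^{\frac{k^2}{n}} \|f\|_{\infty}^{\frac{n-k}{n}} \min_{T \in GL(n),\, \lambda_n(T A_{[k]}(f)) = 1}W_{n-k} (T A_{[k]}(f)).
$$
So it suffices to bound the right-hand minimum by $\omega_n (C\sqrt n \ln n)^k$.

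Next, apply \eqref{alex} with $i=n-1$ and $j=n-k$ (for $k=1$ this is trivial). For any convex body $K$ this gives
$$
\left(\frac{W_{n-k}(K)}{\omega_n}\right)^{1/k} \le \frac{W_{n-1}(K)}{\omega_n},
$$
that is, $W_{n-k}(K) \le \omega_n^{1-k} W_{n-1}(K)^k$. Applying this to each body $T A_{[k]}(f)$ of volume $1$ and minimizing over $T$, the right-hand minimum is at most
$$
\omega_n^{1-k}\Bigl(\min_{T \in GL(n),\, \lambda_n(TA_{[k]}(f))=1} W_{n-1}(TA_{[k]}(f))\Bigr)^k .
$$
The minimum of a $k$-th power is the $k$-th power of the minimum, since $t\mapsto t^k$ is increasing on $\mathbb{R}_+$.

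By Theorem \ref{prop1}, applied to the convex body $A=A_{[k]}(f)$, the inner minimum is at most $C\omega_n\sqrt n\ln n$. Here $A_{[k]}(f)$ is a closed convex set by upper semicontinuity and log-concavity. It is bounded with positive volume by Lemma \ref{lemma1}, since $0<\|f\|_1<\infty$ and a convex set of finite positive volume is bounded. Hence the right-hand minimum is at most
$$
\omega_n^{1-k}(C\omega_n\sqrt n\ln n)^k=\omega_n (C\sqrt n\ln n)^k,
$$
which is the claim.

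There is no real obstacle in this argument; the only care needed is the direction of the exponents in Alexandrov's inequality and the existence of the minima. The minima are attained by compactness modulo $SL(n)$ and the continuity of Quermassintegrals. Alternatively, one can work with infima throughout, which changes nothing in the argument.
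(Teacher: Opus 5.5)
Your proposal is correct and follows the paper's own argument: Theorem \ref{th1}, then Alexandrov's inequality \eqref{alex} with $i=n-1$, $j=n-k$ to get $W_{n-k}\le \omega_n^{1-k}W_{n-1}^k$, then the reverse Urysohn inequality (Theorem \ref{prop1}) applied to $A_{[k]}(f)$. You also check via Lemma \ref{lemma1} that $A_{[k]}(f)$ is a genuine convex body, and note that infima suffice in place of minima; the paper leaves both points implicit.
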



\begin{thebibliography}{99}

\bibitem{A-GMJR}
Alonso--Gutierrez, D., Merino, B. G., Jim\'enez, C.H., Villa, R.:
John’s Ellipsoid and the Integral Ratio of a Log-Concave Function. 
J. Geom. Anal. 28(2), 1182--1201 (2018).
https://doi.org/10.1007/s12220-017-9858-4

\bibitem{A-G}
Alonso--Guti\'errez, D.: A Reverse Rogers--Shephard Inequality for Log-Concave Functions. 
J. Geom. Anal. 29(1), 299--315 (2019).
https://doi.org/10.1007/s12220-018-9991-8

\bibitem{A-AKM}
Artstein--Avidan, S., Klartag, B., Milman, V.:
The Santal\'o point of a function, and a functional form of the Santal\'o inequality.
Mathematika 51(1--2), 33--48 (2004).
https://doi.org/10.1112/S0025579300015497

\bibitem{A-AGM}
Artstein--Avidan, S., Giannopoulos, A., Milman, V.:
Asymptotic Geometric Analysis, Part I.
Mathematical surveys and monographs, vol. 202. American Mathematical Society, pp. xx+451 (2015).
https://doi.org/10.1090/surv/202

\bibitem{BCF}
Bobkov, S.G., Colesanti, A., Fragala, I.:
Quermassintegrals of quasi-concave functions and generalized Prékopa-–Leindler inequalities.
manuscr. math.. 143(1), 131--169 (2014).
https://doi.org/10.1007/s00229-013-0619-9

\bibitem{Bog}
Bogachev, V.I.: Differentiable measures and the Malliavin calculus, Math. Surveys
Monogr., 164, Amer. Math. Soc., Providence, RI (2010).

\bibitem{Bor}
Borell C.: Convex measures on locally convex spaces. Ark. Math, 12(1), 239--252 (1974).

\bibitem{BGVV}
Brazitikos, S., Giannopoulos, A., Valettas, P., Vritsiou, B.H.: 
Geometry of isotropic convex bodies. Am. Math. Soc. (2014).

\bibitem{GK}
Giannopoulos, A., and Koldobsky, A.: 
Variants of the Busemann--Petty Problem and of the Shephard Problem.
Int. Math. Res. Not., 2017(3), 921--943 (2017).
https://doi.org/10.1093/imrn/rnw046

\bibitem{GoK}
Gorev, V., Kosov, E.:
Functional analogs of the Shephard, Busemann-Petty, and Milman problems. To appear in J. Anal. Math. (2024).
https://doi.org/10.48550/arXiv.2208.13308

\bibitem{Grinb}
Grinberg, E.L.:
Isoperimetric inequalities and identities for k-dimensional cross-sections of convex bodies.
Math. Ann. 291, 75--86 (1991).
https://doi.org/10.1007/BF01445191

\bibitem{HLP}
Hardy, G.H., Littlewood, J.E., P\'olya, G:
Inequalities.
Cambridge (1934).
https://doi.org/10.1017/S0025557200143451

\bibitem{KM}
Klartag, B., Milman, V.D.:
Geometry of log-concave functions and measures.
Geom. Dedicata 112(1), 169--182 (2005).
https://doi.org/10.1007/s10711-004-2462-3

\bibitem{Kold}
Koldobsky, A.:
Fourier analysis in convex geometry.
Am. Math. Soc. – №. 116 (2005).
https://doi.org/10.1090/surv/116

\bibitem{MY}
Milman, E., Yehudayoff, A.:
Sharp Isoperimetric Inequalities for Affine Quermassintegrals.
J. Am. Math. Soc, 36(4), 1061-1101. (2023).
https://doi.org/10.1090/jams/1012

\bibitem{Sch}
Schneider, R.:
Convex bodies: the Brunn-–Minkowski theory. – Cambridge university press – №. 151. (2014).
https://doi.org/10.1017/CBO9781139003858

\bibitem{Tz}
Tziotziou, N.:
Inequalities for Sections and Projections of Log-Concave Functions: . J. Geom. Anal., 36(4), 145. (2026).

\bibitem{Zh}
Zhang, G.:
The Affine Sobolev Inequality.
J. Differ. Geom, 53(1), 183-202 (1999).
https://doi.org/10.4310/jdg/1214425451

\end{thebibliography}
\end{document}